\crefname{hypothesis}{Hypothesis}{Hypotheses}
\Crefname{ALC@unique}{Line}{Lines}
\colorlet{texcscolor}{blue!50!black}
\colorlet{texemcolor}{red!70!black}
\colorlet{texpreamble}{red!70!black}
\colorlet{codebackground}{black!25!white!25}
\lstdefinestyle{siamlatex}{%
	style=tcblatex,
	texcsstyle=*\color{texcscolor},
	texcsstyle=[2]\color{texemcolor},
	keywordstyle=[2]\color{texemcolor},
	moretexcs={cref,Cref,maketitle,mathcal,text,headers,email,url},
}
\DeclareTotalTCBox{\code}{ v O{} }
{ %fontupper=\ttfamily\color{texemcolor},
	fontupper=\ttfamily\color{black},
	nobeforeafter,
	tcbox raise base,
	colback=codebackground,colframe=white,
	top=0pt,bottom=0pt,left=0mm,right=0mm,
	leftrule=0pt,rightrule=0pt,toprule=0mm,bottomrule=0mm,
	boxsep=0.5mm,
	#2}{#1}
\patchcmd\newpage{\vfil}{}{}{}
	\title{On the Improved Rates of Convergence for Mat\'ern-type Kernel Ridge Regression, with Application to Calibration of Computer Models\thanks{Submitted to the editors DATE.
			\funding{Tuo's work is supported by NSF grants DMS-1914636 and DMS-1564438, and also by the National Center for Mathematics and Interdisciplinary Sciences in CAS and NSFC grants 11501551, 11271355 and 11671386. Wu's work is supported by NSF grants DMS-1564438 and DMS-1914632.}}}
	\author{Rui Tuo\thanks{Department of Industrial and Systems Engineering, Texas A\&M University,	College Station, TX 77843, USA (\email{ruituo@tamu.edu}).}
		\and Yan Wang\thanks{College of Appied Sciences, Beijing University of Technology, Beijing 100124, China (\email{yanwang@bjut.edu.cn}).}
		\and C. F. Jeff Wu\thanks{School of Industrial and Systems Engineering,	Georgia Institute of Technology, Atlanta, Georgia 30332, USA (\email{jeff.wu@isye.gatech.edu})}}
	\title{On the Improved Rates of Convergence for Mat\'ern-type Kernel Ridge Regression, with Application to Calibration of Computer Models\thanks{Submitted to the editors DATE.
			\funding{Tuo's work is supported by NSF grants DMS-1914636 and DMS-1564438, and also by the National Center for Mathematics and Interdisciplinary Sciences in CAS and NSFC grants 11501551, 11271355 and 11671386. Wu's work is supported by NSF grants DMS-1564438 and DMS-1914632.}}}
	\author{Rui Tuo\thanks{Department of Industrial and Systems Engineering, Texas A\&M University,	College Station, TX 77843, USA (\email{ruituo@tamu.edu}).}
		\and Yan Wang\thanks{College of Appied Sciences, Beijing University of Technology, Beijing 100124, China (\email{yanwang@bjut.edu.cn}).}
		\and C. F. Jeff Wu\thanks{School of Industrial and Systems Engineering,	Georgia Institute of Technology, Atlanta, Georgia 30332, USA (\email{jeff.wu@isye.gatech.edu})}}
\begin{document}

\maketitle

% REQUIRED
\begin{abstract}
Kernel ridge regression is an important nonparametric method for estimating smooth functions. We introduce a new set of conditions, under which the actual rates of convergence of the kernel ridge regression estimator under both the $L_2$ norm and the norm of the reproducing kernel Hilbert space exceed the standard minimax rates. An application of this theory leads to a new understanding of the Kennedy-O'Hagan approach [J. R. Stat. Soc. Ser. B. Stat. Methodol. 63 (2001) 425–464] for calibrating model parameters of computer simulation. We prove that, under certain conditions, the Kennedy-O'Hagan calibration estimator with a known covariance function converges to the minimizer of the norm of the residual function in the reproducing kernel Hilbert space.
\end{abstract}

% REQUIRED
\begin{keywords}
{nonparametric regression},
{reproducing kernel Hilbert space},
{kriging},
{calibration of parameters}
\end{keywords}

% REQUIRED
\begin{AMS}
  62G08, 62M30, 62M40
\end{AMS}

\section{Introduction}

A major challenge in computer simulation of complex systems is to choose suitable model parameters. These parameters usually represent specific intrinsic attributes of the system. The input values of the model parameters can significantly affect the accuracy and  usefulness of the computer output. When physical observations are available, one can adjust the computer model parameters so that the computer outputs match the physical data. We call this activity the calibration of computer models.

The celebrated Bayesian calibration method by Kennedy and O'Hagan \cite{kennedy2001bayesian} is one of the major and widely used approaches for the calibration of computer models. A remarkable contribution of \cite{kennedy2001bayesian} is to incorporate a ``discrepancy function'' to model the difference between the computer outputs and the physical process. This discrepancy does exist in most computer simulation problems, because we have to resort to simplifications and unrealistic assumptions when building the computer models.

Without an informative prior, the Kennedy and O'Hagan model is non-identifiable, because one cannot determine the model parameters and the discrepancy function simultaneously. Kennedy and O'Hagan \cite{kennedy2001bayesian} used a Gaussian process as a prior for the discrepancy function. Tuo and Wu \cite{tuo2016theoretical} conducted a theoretical study on a simplified version of the Kennedy-O'Hagan method (abbreviated as the K-O method) when the physical data are noiseless. Under this condition, the radial basis functions approximation can be regarded as a frequentist version of Gaussian process regression. With the help of related mathematical tools, Tuo and Wu \cite{tuo2016theoretical} identified the limit value of the Kennedy-O'Hagan method as well as the rate of convergence.

A primary goal of this work is to establish an asymptotic theory for the K-O method with noisy physical data. The frequentist version of the Gaussian process regression, in this situation, is the kernel ridge regression \cite{saunders1998ridge}. %We develop an improved rate of convergence of the kernel ridge regression method under isotropic Mat\'ern kernels. This result is essential in studying the asymptotic properties of the K-O method.
With an improved rate of convergence for kernel ridge regression, we prove that, under certain conditions, the K-O estimator tends to the parameter value which minimizes the norm of the residual function in the reproducing kernel Hilbert space. We also present the rate of convergence of the K-O estimator. 
As a consequence, we relax a key and rather restrictive assumption in \cite{tuo2016theoretical}. %, which deals with the method by \cite{kennedy2001bayesian} on the calibration of computer models. 
Tuo and Wu had to assume that the physical experiments have no random errors, which is not realistic.

There is a vast literature on the theoretical properties of ridge kernel regression. It is known that the rate of convergence of this method can be improved by imposing extra smoothness conditions on the underlying function; see, e.g., \cite{gu2002smoothing}. We refer to \cite{blanchard2018optimal,dicker2017kernel,guo2017learning,lin2017distributed} and the references therein for the recent advances in this area. In this work, we present some results on the improved rates similar to the above works. Compared with the existing ones, our model settings are closer to the practical applications in engineering and computer experiments. First, the existing methods focus on kernels constructed by a set of eigenvalues and orthonormal basis functions. This construction has, albeit mathematically general, not been widely used in practice, because the computational cost is high, and an orthonormal basis may be difficult to obtain for a general input domain. In this work, we consider the widely used Mat\'ern kernels. Second, the existing results focus on random designs, which are not usually adopted in engineering. The present work considers fixed designs satisfying some space-filling properties. Third, the existing results for the improved rates are not sufficient to develop an asymptotic theory for the K-O method. We obtain a strengthened version of the improved rates, which lead to the desired asymptotic theory for calibration. It is worth noting that the mathematical treatments in this paper differ from those in the works mentioned above, and our work provides some new insight on kernel ridge regression.

This article is organized as follows. In Section \ref{sec:irkrr}, we introduce some background of this work and present the improved rates of convergence for kernel ridge regression. In Section \ref{sec:gaussian}, we establish an asymptotic theory for the K-O calibration estimators. In Section \ref{sec:simulation}, we validate our theoretical assertion with a numerical study. Concluding remarks are made in Section \ref{sec:discussion}. Appendix \ref{App:proof} contains the long proofs in this article.

\section{Improved rates for kernel ridge regression}\label{sec:irkrr}
In this section, we discuss the mathematical tool and our results on the improved rates of convergence for kernel ridge regression.
\subsection{Overview}
Consider a nonparametric regression model
\begin{eqnarray}\label{NP}
y_i=f(x_i)+e_i,
\end{eqnarray}
where $f$ is a smooth function whose domain of definition $\Omega$ is a convex and compact subset of $\mathbb{R}^d$, and $e_i$'s are independent and identically distributed random sequence with mean zero and finite variance. The problem of interest is to recover $f$ from the data $(x_i,y_i), i=1,\ldots,n$.
Kernel ridge regression is one of the important methods to deal with this problem. This method has been widely used in statistics and machine learning \cite{saunders1998ridge}. It also has close relationships with classic kernel-based regression methods like smoothing splines or thin-plate splines \cite{wahba1990spline}.

Suppose $f$ lies in the Sobolev space $H^m(\Omega)$ with $m>d/2$. By choosing a kernel function with $m$ degree of smoothness, the kernel ridge regression, as defined in (\ref{eq2.3}), can reach the standard rates of convergence
\begin{eqnarray}
\|\hat{f}_n-f\|_{L_2(\Omega)}&=&O_p(n^{-\frac{m}{2m+d}}),\label{ratelow}\\
\|\hat{f}_n-f\|_{H^m(\Omega)}&=&O_p(1),\label{Op1}
\end{eqnarray}
where $\|\cdot\|_{L_2(\Omega)}$ and $\|\cdot\|_{H^m(\Omega)}$ denote the corresponding $L_2$ and Sobolev norm respectively. See, for example, \cite{gu2002smoothing,geer2000empirical} for details.
These rates are known to be the minimax rates in the current context \cite{stone1982optimal}. That is, these rates are in general not improvable.

From (\ref{ratelow}), we can see that the convergence rate depends on the smoothness of the underlying function. If we assume a higher smoothness condition for $f$, we can achieve a better rate by applying the kernel ridge regression with a kernel function as smooth as $f$. However, the smoothness of most practical underlying functions is unknown. Therefore, usually we cannot identify the optimal kernel functions. In practice, kernel functions with relatively low smoothness are frequently used. For instance, in spatial statistics and computer experiments, Mat\'ern kernels (see Section \ref{sec:RKHS} for the definition) with smoothness parameter 3/2 or 5/2 are widely used \cite{stein2012interpolation,santner2013design}. In this article, we show that if the underlying function $f$ is smoother than the kernel function, the rate of convergence of the kernel ridge regression may be improved.
%
%By adding additional assumptions on $f$, the rates in (\ref{ratelow})-(\ref{Op1}) can be improved. For instance, if we assume a higher smoothness condition for $f$, a better rate can be achieved by applying the kernel ridge regression with a smoother kernel. The main purpose of this article is to show that, we show that, improved rates could be attained without changing the kernel function. 
Specifically, we identify a dense subset $S\subset H^m(\Omega)$ in such a way that if $f\in S$, we can reach the improved rates of convergence
\begin{eqnarray}
\|\hat{f}_n-f\|_{L_2(\Omega)}&=&O_p(n^{-\frac{2m}{4m+d}}),\\
\|\hat{f}_n-f\|_{H^m(\Omega)}&=&O_p(n^{-\frac{m}{4m+d}}).\label{op1}
\end{eqnarray}

Clearly, there is a substantial improvement from (\ref{Op1}) to (\ref{op1}), because (\ref{Op1}) does not entail convergence. We also prove an improved rate of convergence under the norm of the reproducing kernel Hilbert space generated by the kernel function, denoted by $\mathcal{N}$, as
\begin{eqnarray}
\|\hat{f}_n-f\|_{\mathcal{N}}=O_p(n^{-\frac{m}{4m+d}}).\label{rateRKHS}
\end{eqnarray}
%In the literature, the rate of $\|\hat{f}_n-f\|_{\mathcal{N}}$ is commonly known to be $O_p(1)$, like that under the Sobolev norm $\|\cdot\|_{H^m(\Omega)}$.
%{\textbf{(Rewrite)} The consistency in the reproducing kernel Hilbert space like (\ref{rateRKHS}) has not been reported in the literature to the best of our knowledge. }

%\subsection{Background}\label{sec:background}

%In this section, we introduce necessary mathematical tools, which are essential in the development of the improved rates of convergence for kernel ridge regression. In Section \ref{sec:RKHS}, we introduce the concept of reproducing kernel Hilbert spaces. In Section \ref{sec:SDA}, we review an improved rate of convergence in radial basis functions approximation, which inspires the current work. Readers can find most ingredients in this section in the book of \cite{wendland2004scattered}.

\subsection{Reproducing Kernel Hilbert Spaces}\label{sec:RKHS}

Our study will employ the reproducing kernel Hilbert spaces (also called the native spaces) as the mathematical tool. Let $\Omega$ be a subset of $\mathbf{R}^d$.
%Let $\Omega\subseteq \rm{R}^{d}$ denote the region of interest for the input variables, which is convex and compact. Let $\bm{x}_1,\bm{x}_2,\ldots,\bm{x}_n$ be a set of points on $\Omega$.
Assume that $K:\Omega\times\Omega\rightarrow {\rm{R}}$ is a symmetric positive definite kernel. Define the linear space
\begin{eqnarray}\label{FPhi}
F_{K}(\Omega)=\left\{\sum_{i=1}^n\beta_i K(\cdot,{x}_i):\beta_i\in \mathbb{R},{x}_i\in \Omega,n\in\mathbb{N}\right\},
\end{eqnarray}
and equip this space with the bilinear form
\begin{eqnarray}
\left\langle\sum_{i=1}^n\beta_i K(\cdot,{x}_i),\sum_{j=1}^m\gamma_j K(\cdot, x'_j)\right\rangle_K:=\sum_{i=1}^n\sum_{j=1}^m\beta_i\gamma_j K({x}_i, x'_j).
\label{eq1.2}
\end{eqnarray}
Then the \emph{reproducing kernel Hilbert space} $\mathcal{N}_{K}(\Omega)$ generated by the kernel function $K$ is defined as the closure of $F_{K}(\Omega)$ under the inner product $\langle\cdot,\cdot\rangle_{K}$, and the norm of  $\mathcal{N}_{K}(\Omega)$  is $\| f\|_{\mathcal{N}_{K}(\Omega)}=\sqrt{\langle f,f\rangle_{\mathcal{N}_{K}(\Omega)}}$, where $\langle\cdot,\cdot\rangle_{\mathcal{N}_{K}(\Omega)}$ is induced by $\langle \cdot,\cdot\rangle_{K}$. More detail about reproducing kernel Hilbert space can be found in \cite{wahba1990spline,wendland2004scattered}.

In this work, we suppose the kernel function $K$ is stationary, i.e., $K(x,y)$ depends only on $x-y$. We denote $K(x,y)=:\Phi(x-y)$ and also denote the reproducing kernel Hilbert space $\mathcal{N}_{K}(\Omega)$ by $\mathcal{N}_{\Phi}(\Omega)$. Specifically, we focus on the Mat\'ern kernel function \cite{santner2013design,stein2012interpolation} defined by
\begin{eqnarray}\label{matern}
\Phi(x;\nu,\phi)=\frac{1}{\Gamma(\nu)2^{\nu-1}}(2\sqrt{\nu}\phi |x|)^\nu K_\nu(2\sqrt{\nu}\phi|x|),
\end{eqnarray}
where $K_\nu$ is the modified Bessel function of the second kind; $\nu$ and $\phi$ are \textit{fixed} parameters.
In (\ref{matern}), $\phi$ is a scale parameter, and $\nu$ is often called the smoothness parameter because it is related to the smoothness of the Gaussian processes associated with this kernel (covariance) function.

The smoothness of the kernel $\Phi$ is somehow inherited by the reproducing kernel Hilbert space $\mathcal{N}_{\Phi}(\Omega)$ \cite[Theorem 10.45]{wendland2004scattered}. Specifically, if $\Phi$ is a Mat\'ern kernel in (\ref{matern}), $\mathcal{N}_{\Phi}(\Omega)$ is equal to the (fractional) Sobolev space $H^{\nu+d/2}(\Omega)$, with equivalent norms. See also Corollary 1 of \cite{tuo2016theoretical}. Here we see that the smoothness parameter $\nu$ is also related to the smoothness of the Sobolev space.
%Suppose $\Phi$ is continuous and integrable on $\mathbf{R}^d$, and the Fourier transform of $\mathbf{\Phi}$, denoted as $\hat{\Phi}(\omega)$, decays at the rate $$

\subsection{An Improved Rate in Scattered Data Approximation}\label{sec:SDA}

The current work is partially inspired by a result in scattered data approximation \cite{wendland2004scattered}, which gives an improved rate of convergence for radial basis function interpolation. In this section, we briefly review this result.

Let $f$ be an underlying function. Suppose we have observed the function values of $f$ over some scattered points $X=\{x_1,\ldots,x_n\}$. Then, an interpolant of $f$ is constructed by solving the optimization problem:
\begin{equation}
\begin{gathered}\label{RBF}
%\begin{split}
\min\|g\|_{\mathcal{N}_\Phi(\Omega)}\\
\text{s.t. } g(x_i)=f(x_i) \text{ for } i=1,\ldots,n.
%\end{split}
\end{gathered}
\end{equation}
We denote this interpolant by $s_{f,X}$, which is commonly known as the radial basis function interpolant.
Formula (\ref{RBF}) is the limit case of the kernel ridge regression estimator introduced later in (\ref{eq2.3}), with $\lambda_n\downarrow 0$. %The solution to (\ref{RBF}) has an analytic form, with
%\begin{eqnarray}\label{RBFsolution}
%s_{f,X}(x)=\sum_{i=1}^n u_i\Phi(x,x_i),
%\end{eqnarray}
%where $u=(u_1,\ldots,u_n)^T$ solves the linear equation $Y=\mathbf{\Phi}u$ with $Y=(y(x_1),\ldots,y(x_n))^T$ and %$\mathbf{\Phi}=(\Phi(x_i,x_j))_{i j}$.

The error estimate for radial basis function interpolant is well established in the literature. See \cite{wendland2004scattered}. Suppose $\mathcal{N}_\Phi(\Omega)$ is continuously embedded into a (fractional) Sobolev space $H^m(\Omega)$ and the design $X$ is quasi-uniform (see Section \ref{sec ir} for the formal definition). Then a standard error bound is
\begin{eqnarray}\label{ee1}
\|f-s_{f,X}\|_{L_2(\Omega)}\leq C n^{-d/m} \|f-s_{f,X}\|_{\mathcal{N}_\Phi(\Omega)},
\end{eqnarray}
for some constant $C$ independent of $f$, $n$ and the choice of a quasi-uniform design. Radial basis function interpolation satisfies the orthogonality condition
\begin{eqnarray}\label{orthogonality}
\langle f-s_{f,X},s_{f,X}\rangle_{\mathcal{N}_\Phi(\Omega)}=0,
\end{eqnarray}
which implies $\|f-s_{f,X}\|_{\mathcal{N}_\Phi(\Omega)}=O(1)$ as $n$ tends to infinity. Therefore, $\|f-s_{f,X}\|_{L_2(\Omega)}$ decays at least with the order $O(n^{-d/m})$ according to (\ref{ee1}).

To pursue an improved rate of convergence, one may ask whether $\|f-s_{f,X}\|_{\mathcal{N}_\Phi(\Omega)}=o(1)$. Although this does not hold generally \cite{Edmunds1996Function,santin2016approximation}, we do have an improved rate if there exists $v\in L_2(\Omega)$, so that
\begin{eqnarray}\label{T}
f(x)=\int_\Omega \Phi(x-t)v(t)d t.
\end{eqnarray}
Proposition 10.28 of \cite{wendland2004scattered} shows that functions with the form (\ref{T}) is a dense subset of $\mathcal{N}_\Phi(\Omega)$.
It shows that in this case for any $g\in\mathcal{N}_\Phi(\Omega)$,
\begin{eqnarray}\label{normequality}
\langle f,g\rangle_{\mathcal{N}_\Phi(\Omega)}=\langle v,g\rangle_{L_2(\Omega)}.
\end{eqnarray}
Combining (\ref{ee1}), (\ref{orthogonality}) and (\ref{normequality}), and applying Cauchy-Schwarz inequality yield
\begin{eqnarray}
\|f-s_{f,X}\|_{L_2(\Omega)}^2&\leq& C^2 n^{-2d/m} \|f-s_{f,X}\|_{\mathcal{N}_\Phi(\Omega)}^2\label{i1}\\
&=&C^2 n^{-2d/m} \langle f-s_{f,X}, f\rangle_{\mathcal{N}_\Phi(\Omega)}\nonumber\\
&=&C^2 n^{-2d/m} \langle f-s_{f,X}, v\rangle_{L_2(\Omega)}\nonumber\\
&\leq &C^2 n^{-2d/m} \|f-s_{f,X}\|_{L_2(\Omega)}\|v\|_{L_2(\Omega)}.\label{i2}
\end{eqnarray}
Canceling $\|f-s_{f,X}\|_{L_2(\Omega)}$ from both sides of (\ref{i2}) and comparing (\ref{i1}) and (\ref{i2}) yield the improved error bounds
\begin{eqnarray*}
	\|f-s_{f,X}\|_{L_2(\Omega)}&\leq& C^2 n^{-2d/m}\|v\|_{L_2(\Omega)},\\
	\|f-s_{f,X}\|_{\mathcal{N}_\Phi(\Omega)}&\leq& C n^{-d/m} \|v\|_{L_2(\Omega)}.
\end{eqnarray*}

In Section \ref{sec ir}, we will use the same assumption (\ref{T}) to derive improved rates of convergence for kernel ridge regression.

%It seems that the condition (\ref{T}) does not have an obvious interpretation. However, i
If a Mat\'ern kernel (\ref{matern}) is used, (\ref{T}) is equivalent to imposing certain higher-order smoothness condition. Before introducing the condition, we discuss the extension theorem of reproducing kernel Hilbert spaces.

\begin{proposition}\label{Prop:extension}
	Each $h\in\mathcal{N}_\Phi(\Omega)$ has an extension $h_e\in\mathcal{N}_\Phi(\mathbb{R}^d)$ which defines an isometric map from $\mathcal{N}_\Phi(\Omega)$ to $\mathcal{N}_\Phi(\mathbb{R}^d)$. In other words, $h_e|_\Omega=h$, and $\langle h_e,h'_e\rangle_{\mathcal{N}_\Phi(\mathbb{R}^d)}=\langle h,h'\rangle_{\mathcal{N}_\Phi(\Omega)}$ for all $h,h'\in\mathcal{N}_\Phi(\Omega)$, where $h_e|_\Omega$ denotes the restriction of $h_e$ on the region $\Omega$.
\end{proposition}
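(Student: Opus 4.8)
The plan is to write the extension map down explicitly on a dense subspace and then pass to the closure. Recall that $\mathcal{N}_\Phi(\Omega)$ is, by construction, the completion of $F_\Phi(\Omega)=\operatorname{span}\{\Phi(\cdot-x):x\in\Omega\}$ under the inner product $(\ref{eq1.2})$, and likewise $\mathcal{N}_\Phi(\mathbb{R}^d)$ is the completion of $F_\Phi(\mathbb{R}^d)$; both completions are realized as honest spaces of functions because the reproducing property together with Cauchy--Schwarz gives the uniform bound $|g(x)|\le\sqrt{\Phi(0)}\,\|g\|$, so point evaluations are continuous and extend to the completion. Since $\Omega\subset\mathbb{R}^d$, every $g=\sum_{i=1}^n\beta_i\Phi(\cdot-x_i)\in F_\Phi(\Omega)$ can be reinterpreted verbatim as an element $T_0g$ of $F_\Phi(\mathbb{R}^d)$, now regarded as a function on all of $\mathbb{R}^d$.

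First I would verify that $T_0$ is well defined and isometric onto its image. The isometry is immediate: the form $(\ref{eq1.2})$ depends only on the coefficients and the numbers $\Phi(x_i-x_j')$, so $\langle T_0g,T_0g'\rangle_{\mathcal{N}_\Phi(\mathbb{R}^d)}=\sum_{i,j}\beta_i\gamma_j\Phi(x_i-x_j')=\langle g,g'\rangle_{\mathcal{N}_\Phi(\Omega)}$. Well-definedness amounts to showing that if two finite combinations with centres in $\Omega$ represent the same function on $\Omega$, then their $\mathbb{R}^d$-versions coincide everywhere: the difference $w=\sum_k c_k\Phi(\cdot-z_k)$ vanishes on $\Omega$, hence $\|w\|_{\mathcal{N}_\Phi(\mathbb{R}^d)}^2=\sum_k c_k\,w(z_k)=0$ because every $z_k\in\Omega$, and the sup-norm bound above forces $w\equiv0$ on $\mathbb{R}^d$. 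Since $F_\Phi(\Omega)$ is dense in $\mathcal{N}_\Phi(\Omega)$, $T_0$ is an isometry, and $\mathcal{N}_\Phi(\mathbb{R}^d)$ is complete, $T_0$ extends uniquely to a linear isometry $h\mapsto h_e$ from $\mathcal{N}_\Phi(\Omega)$ into $\mathcal{N}_\Phi(\mathbb{R}^d)$, which gives $\langle h_e,h'_e\rangle_{\mathcal{N}_\Phi(\mathbb{R}^d)}=\langle h,h'\rangle_{\mathcal{N}_\Phi(\Omega)}$ at once. It remains to check $h_e|_\Omega=h$: this is clear for $h\in F_\Phi(\Omega)$, and for general $h$ one picks $h_k\in F_\Phi(\Omega)$ with $h_k\to h$ in $\mathcal{N}_\Phi(\Omega)$, so that $T_0h_k\to h_e$ in $\mathcal{N}_\Phi(\mathbb{R}^d)$ while $(T_0h_k)(x)=h_k(x)$ for $x\in\Omega$; passing to the limit via continuity of point evaluation on both spaces yields $h_e(x)=h(x)$ for $x\in\Omega$.

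There is no deep obstacle in this argument; the one step that genuinely needs care is the well-definedness of $T_0$, i.e.\ ruling out that distinct elements of $F_\Phi(\mathbb{R}^d)$ can restrict to the same function on $\Omega$ — and this is precisely where stationarity enters, since it is what makes $\Phi(\cdot-x)$ a legitimate member of $F_\Phi(\mathbb{R}^d)$ for centres $x$ outside $\Omega$ as well. I would also note that, since for a Mat\'ern kernel $\mathcal{N}_\Phi(\Omega)$ coincides with $H^{\nu+d/2}(\Omega)$ with equivalent norms, this proposition is a special case of the general restriction/extension theorem for native spaces \cite[Theorem 10.46]{wendland2004scattered} and could simply be cited; I nevertheless prefer the self-contained argument above because it produces a true isometry rather than merely a norm equivalence.
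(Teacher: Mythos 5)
Your argument is correct and follows essentially the same route as the paper's own sketch: define the extension on the dense subspace $F_\Phi(\Omega)$ by reinterpreting each finite combination $\sum_i\beta_i\Phi(\cdot-x_i)$ as a function on $\mathbb{R}^d$, observe that the inner product (\ref{eq1.2}) is preserved, and extend to the completion $\mathcal{N}_\Phi(\Omega)$ by density (the paper defers the details to Theorem 10.46 of \cite{wendland2004scattered}). Your additional checks — well-definedness of $T_0$ via the vanishing-norm argument and the restriction property via continuity of point evaluations — are exactly the details the paper leaves to the cited reference, so nothing further is needed.
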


The main steps in proving Proposition \ref{Prop:extension} are as follows. First, we consider the map from $F_\Phi(\Omega)$ defined in (\ref{FPhi}) to $F_\Phi(\mathbb{R}^d)$ given by
\begin{eqnarray*}
	\sum_{i=1}^n \beta_i \Phi(x-x_i), x\in\Omega \mapsto \sum_{i=1}^n \beta_i \Phi(x-x_i), x\in\mathbb{R}^d,
\end{eqnarray*} 
which defines an extension for each function in $F_\Phi(\Omega)$.
Clearly, this map preserves the inner product (\ref{eq1.2}). Next, by using some functional analysis machinery such as taking Cauchy sequences, we can extend the domain of definition of this map from $F_\Phi(\Omega)$ to its closure, the Hilbert space $\mathcal{N}_\Phi(\Omega)$, and the extended map is also isometric.
We refer to Theorem 10.46 of \cite{wendland2004scattered} for details of the proof. 

Theorem \ref{th:existence} gives an equivalent statement of the condition (\ref{T}).

\begin{theorem}\label{th:existence}
	Suppose $\Phi$ is a Mat\'ern kernel (\ref{matern}) with smoothness parameter $\nu=m-d/2$, and $f\in\mathcal{N}_\Phi(\Omega)$.
	Then, the integral equation 
	\begin{eqnarray}\label{integralequation}
	f(x)=\int_\Omega \Phi(x-t) v(t) d t
	\end{eqnarray}
	has a solution $v\in L_2(\Omega)$ if and only if the extended function $f_e\in H^{2m}(\mathbb{R}^d)$. 
\end{theorem}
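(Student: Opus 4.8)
The strategy is to pass to the Fourier side on $\mathbb{R}^d$ and exploit the fact that the extension in Proposition~\ref{Prop:extension} is the \emph{canonical} (minimal-norm) one. For the Mat\'ern kernel (\ref{matern}) one has $\hat{\Phi}(\omega)=c_0(a+|\omega|^2)^{-m}$ for positive constants $c_0,a$ depending only on $\nu,\phi,d$, where $m=\nu+d/2$; hence $\mathcal{N}_\Phi(\mathbb{R}^d)=H^m(\mathbb{R}^d)$ and $\langle g,h\rangle_{\mathcal{N}_\Phi(\mathbb{R}^d)}=(2\pi)^{-d}\int_{\mathbb{R}^d}\hat{g}(\omega)\overline{\hat{h}(\omega)}/\hat{\Phi}(\omega)\,d\omega$. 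Two elementary facts follow from Plancherel's theorem. First, $u\mapsto\Phi\ast u$ is a bounded linear bijection from $L_2(\mathbb{R}^d)$ onto $H^{2m}(\mathbb{R}^d)$, injective on $L_2(\mathbb{R}^d)$, with inverse the Fourier multiplier with symbol $c_0^{-1}(a+|\omega|^2)^m$ (written $c_0^{-1}(a-\Delta)^m$), because $\widehat{\Phi\ast u}=\hat\Phi\hat u$ and $(1+|\omega|^2)^{2m}\hat\Phi(\omega)^2$ is bounded above and below by positive constants. Second, writing $R\colon\mathcal{N}_\Phi(\mathbb{R}^d)\to\mathcal{N}_\Phi(\Omega)$ for the restriction $g\mapsto g|_\Omega$, the reproducing property gives $\langle\Phi(\cdot-x),w\rangle_{\mathcal{N}_\Phi(\mathbb{R}^d)}=w(x)=0$ for every $x\in\Omega$ and $w\in\ker R$ (using $m>d/2$, so $w$ is continuous and vanishes on $\overline\Omega$); thus $\ker R$ is orthogonal, in $\mathcal{N}_\Phi(\mathbb{R}^d)$, to the span of $\{\Phi(\cdot-x):x\in\Omega\}$ and hence to its closure $M$. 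By the construction sketched after Proposition~\ref{Prop:extension}, $f_e\in M$, so $f_e\in(\ker R)^\perp$; since $R$ is injective on $(\ker R)^\perp$, $f_e$ is the \emph{unique} element of $(\ker R)^\perp$ with $f_e|_\Omega=f$.

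For the ``only if'' implication, suppose (\ref{integralequation}) has a solution $v\in L_2(\Omega)$, and let $\bar v\in L_2(\mathbb{R}^d)$ be its extension by zero (using compactness of $\Omega$). Put $F:=\Phi\ast\bar v$. Then $F\in H^{2m}(\mathbb{R}^d)$ by the first fact above, and $F(x)=\int_{\mathbb{R}^d}\Phi(x-t)\bar v(t)\,dt=\int_\Omega\Phi(x-t)v(t)\,dt=f(x)$ for $x\in\Omega$, i.e.\ $F|_\Omega=f$. Moreover, for $w\in\ker R$,
\begin{equation*}
\langle F,w\rangle_{\mathcal{N}_\Phi(\mathbb{R}^d)}=(2\pi)^{-d}\int_{\mathbb{R}^d}\hat\Phi\hat{\bar v}\,\overline{\hat w}/\hat\Phi\,d\omega=\langle\bar v,w\rangle_{L_2(\mathbb{R}^d)}=0,
\end{equation*}
because $\bar v$ is supported in $\overline\Omega$ while $w=0$ a.e.\ on $\Omega$. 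Hence $F\in(\ker R)^\perp$, and the uniqueness noted above forces $F=f_e$, so $f_e\in H^{2m}(\mathbb{R}^d)$.

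For the ``if'' implication, suppose $f_e\in H^{2m}(\mathbb{R}^d)$ and set $u:=c_0^{-1}(a-\Delta)^m f_e$, so that $u\in L_2(\mathbb{R}^d)$ and $\Phi\ast u=f_e$ by the first fact. I claim $u=0$ a.e.\ on $\mathbb{R}^d\setminus\overline\Omega$: every $w\in C_c^\infty(\mathbb{R}^d\setminus\overline\Omega)$ lies in $H^m(\mathbb{R}^d)$ and vanishes on $\Omega$, so $w\in\ker R$, and then $f_e\in(\ker R)^\perp$ together with Plancherel gives $0=\langle f_e,w\rangle_{\mathcal{N}_\Phi(\mathbb{R}^d)}=(2\pi)^{-d}\int\hat u\,\overline{\hat w}\,d\omega=\langle u,w\rangle_{L_2(\mathbb{R}^d)}$; since this holds for all such $w$ and $u\in L_2$, $u$ vanishes a.e.\ outside $\overline\Omega$. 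Taking $v:=u|_\Omega\in L_2(\Omega)$ and using that $\partial\Omega$ is Lebesgue-null, we get for $x\in\Omega$ that $f(x)=f_e(x)=(\Phi\ast u)(x)=\int_{\mathbb{R}^d}\Phi(x-t)u(t)\,dt=\int_\Omega\Phi(x-t)v(t)\,dt$, so $v$ solves (\ref{integralequation}).

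The main obstacle is not any single calculation but the identification $F=f_e$ in the first implication (and, dually, the fact that the $L_2$ preimage $u$ of $f_e$ under $\Phi\ast(\cdot)$ is supported in $\overline\Omega$): one must check that the naive convolution-based extension coincides with the distinguished isometric extension of Proposition~\ref{Prop:extension}, which is exactly where the orthogonality $f_e\perp\ker R$ and the injectivity of $R$ on $(\ker R)^\perp$ enter. The remaining ingredients --- the closed form of $\hat\Phi$, the mapping properties of $\Phi\ast(\cdot)$ between $L_2$ and $H^{2m}$, and the Fourier-side computations --- are routine.
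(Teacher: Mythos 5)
Your proof is correct, and it follows the same Fourier-analytic skeleton as the paper's argument (spectral density of the Mat\'ern kernel, the convolution theorem, and the mapping property of $u\mapsto\Phi\ast u$ between $L_2(\mathbb{R}^d)$ and $H^{2m}(\mathbb{R}^d)$), but it handles the two delicate steps differently. The paper imports both from \cite{schaback1999improved}: first, that the canonical extension $f_e$ of $f=(\Phi\ast\bar v)|_\Omega$ is the whole-space convolution itself, and second (Theorem 4.3 there), that when $\mathcal{F}(f_e)/\mathcal{F}(\Phi)\in L_2(\mathbb{R}^d)$ its inverse transform vanishes a.e.\ outside $\Omega$. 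You instead prove both facts directly by characterizing $f_e$ as the unique element of $(\ker R)^{\perp}$ that restricts to $f$, where $R$ is restriction to $\Omega$: membership $f_e\in(\ker R)^{\perp}$ follows from the construction sketched after Proposition \ref{Prop:extension} together with the reproducing property, and uniqueness from injectivity of $R$ on $(\ker R)^{\perp}$; Plancherel then converts orthogonality to $\ker R$ into the $L_2$ statements you need, namely $\langle\bar v,w\rangle_{L_2}=0$ in the ``only if'' direction and, using density of $C_c^\infty(\mathbb{R}^d\setminus\overline\Omega)$ in $L_2$ of that open set, the support property of the preimage $u$ in the ``if'' direction. This buys a self-contained argument (and makes transparent exactly why the naive convolution extension coincides with the isometric extension) at the cost of some Hilbert-space bookkeeping, whereas the paper is shorter by citing Schaback. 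Incidentally, your exponent $\hat\Phi(\omega)\asymp(1+\|\omega\|^2)^{-m}$ is the one consistent with the $H^{2m}$ conclusion; the paper's displayed formula (\ref{Fourier}) carries a factor-of-two slip in the exponent, which your computation silently corrects.
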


To maintain flow of the paper, all the long proofs are given in the appendix.

\begin{remark}
	Obviously, $f_e\in H^{2m}(\mathbb{R}^d)$ implies $f\in H^{2m}(\Omega)$. However, the converse is not necessarily true. %$f\in H^{2m}(\Omega)$ does not necessarily imply $f_e\in H^{2m}(\mathbb{R}^d)$. 
	The stronger condition $f_e\in H^{2m}(\mathbb{R}^d)$ essentially requires the smoothness of the function across the boundary of $\Omega$. To illustrate this point, we consider a simple example. Suppose $\Omega=[-1,1], \nu=1/2, \phi=1$. Then the Mat\'ern kernel becomes $\Phi(x)=e^{-|x|}$. Let $f(x)=e^{1-x}$, $x\in[-1,1]$. Then $f\in C^\infty[-1,1]$. However, since $f(x)=\Phi(x-1)$ for $x\in[-1,1]$, according to the discussion after Proposition \ref{Prop:extension}, we have $f_e=\Phi(x-1)=e^{-|x-1|}$, which is not in $H^2(\mathbb{R})$.
\end{remark}

%Now the question is that whether we can improve the rate of convergence $O(n^{-d/m})$. In view of (\ref{ee1}), it is natural to ask whether $\|f-s_{f,X}\|_{\mathcal{N}_\Phi(\Omega)}=o(1)\|f\|_{\mathcal{N}_\Phi(\Omega)}$. However, this is not true in general. To see this, we note from (\ref{RBFsolution}) that $s_{f,X}\in \operatorname{span}\{\Phi(\cdot,x_i),\cdots,\Phi(\cdot,x_n)\}$. Thus if $f$ is orthogonal to $\operatorname{span}\{\Phi(\cdot,x_i),\cdots,\Phi(\cdot,x_n)\}$ in $\mathcal{N}_\Phi(\Omega)$, we have
%\begin{eqnarray*}
%\|f-s_{f,X}\|^2_{\mathcal{N}_\Phi(\Omega)}=\|f\|^2_{\mathcal{N}_\Phi(\Omega)}+\|s_{f,X}\|^2_{\mathcal{N}_\Phi(\Omega)}\geq \|f\|^2_{\mathcal{N}_\Phi(\Omega)}.
%\end{eqnarray*}
%This result also gives an intuition why (\ref{Op1}) is not improvable in general.

\subsection{Rates of Convergence for Kernel Ridge Regression}\label{sec ir}

In this section, we return to model (\ref{NP}). The goal is to estimate the underlying function $f$ from the data $\{(x_i,y_i)\}_{i=1}^n$. As in Section \ref{sec:RKHS}, we choose a positive definite kernel function $\Phi$.
The \textit{kernel ridge regression} estimator of $f$ is defined as
\begin{eqnarray}
\hat f_n=\operatorname*{argmin}_g \frac{1}{n}\sum_{i=1}^n(y_i-g(x_i))^2+\lambda_n \| g \|^2_{\mathcal{N}_{\Phi}(\Omega)},
\label{eq2.3}
\end{eqnarray}
where $\lambda_n>0$ is a tuning parameter to balance the bias and the variance.

The optimization problem (\ref{eq2.3}) can be solved analytically. With the help of the representer theorem \cite{wahba1990spline,scholkopf2001generalized}, we find that $\hat{f}$ has the form
\begin{eqnarray}\label{representer}
\hat{f}_n(x)=\sum_{i=1}^n c_i \Phi(x-x_i),
\end{eqnarray}
where $c_i$'s are undetermined coefficients. Substituting (\ref{representer}) into (\ref{eq2.3}) and invoking (\ref{eq1.2}), the estimation becomes a ridge regression problem weighted by the kernel matrix, and this is where the name ``kernel ridge regression'' comes from. After some calculations, we can find that the vector $c=(c_1,\ldots,c_n)^T$ is given by
\begin{eqnarray}\label{ridge}
c=(\mathbf{\Phi}+n\lambda_n  I_n)^{-1}Y,
\end{eqnarray}
where $\mathbf{\Phi}=(\Phi(x_i,x_j))_{i j}$, $Y=(y_1,\ldots,y_n)^T$ and $I_n$ is the identity matrix.

\subsubsection{Standard Rates of Convergence}

In this paper, we are interested in the conditions that ensure a consistent estimation for $f$ using the kernel ridge regression and the rate of convergence. First, we review the existing results and the standard proof.

Throughout the paper, we assume that the reproducing kernel Hilbert space $\mathcal{N}_\Phi(\Omega)$ is equal to some (fractional) Sobolev space $H^m(\Omega)$ with equivalent norms, for some $m>d/2$. Recall that if $\Phi$ is a Mat\'ern kernel in (\ref{matern}), $\mathcal{N}_\Phi(\Omega)$ is $H^{\nu+d/2}(\Omega)$.
We also assume that the random error $e_i$'s are sub-Gaussian in the sense that there exists universal constants $K,\sigma_0>0$ such that
\begin{eqnarray}
\mathbb{P}(|e_i|>t)\leq K e^{-t^2/\sigma_0^2}
\label{eq2.2}
\end{eqnarray}
holds for all $t>0$. This condition can be relaxed, but the technical details will become more involved and we do not pursue such a treatment here.

Define the \textit{empirical semi-norm} by
\begin{eqnarray*}
	\|f\|_n^2=\frac{1}{n}\sum_{i=1}^n f^2(x_i),
\end{eqnarray*}
and write $a\vee b=\max\{a,b\}$.
The standard convergence results are stated in Proposition \ref{th:existing}. %Here we follow the notion in \cite{geer2000empirical}, in which we may allow $\lambda_n$ to be random as well.

\begin{proposition}\label{th:existing}
	Suppose $f\in H^m(\Omega)$ and $\lambda_n^{-1}=O(n^{\frac{2m}{2m+d}})$. Then the estimator $\hat{f}_n$ given by (\ref{eq2.3}) satisfies
	\begin{eqnarray}
	\begin{split}\label{EC1}	
	\|\hat{f}_n-f\|_n=O_p(\lambda_n^{1/2}\vee n^{-\frac{1}{2}}\lambda_n^{-\frac{d}{4m}}),\\
	\|\hat{f}_n\|_{\mathcal{N}_\Phi(\Omega)}=O_p(1 \vee n^{-\frac{1}{2}}\lambda_n^{-\frac{2m+d}{4m}}).
	\end{split}
	\end{eqnarray}
\end{proposition}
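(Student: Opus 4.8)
The plan is to run the standard empirical-process argument for penalized least squares (cf.\ \cite{geer2000empirical,gu2002smoothing}), specialized to the Sobolev/RKHS setting; because everything is measured in the empirical semi-norm $\|\cdot\|_n$, no design regularity is needed at this stage. First I would record the \emph{basic inequality}: since $\hat f_n$ minimizes the criterion in (\ref{eq2.3}) and $f$ is itself admissible, substituting $g=f$, inserting $y_i=f(x_i)+e_i$ and expanding the square yields
\begin{eqnarray*}
\|\hat f_n-f\|_n^2+\lambda_n\|\hat f_n\|_{\mathcal N_\Phi(\Omega)}^2\;\le\;\frac2n\sum_{i=1}^n e_i\big(\hat f_n(x_i)-f(x_i)\big)+\lambda_n\|f\|_{\mathcal N_\Phi(\Omega)}^2 .
\end{eqnarray*}
Everything then reduces to controlling the weighted empirical process $g\mapsto\frac1n\sum_i e_ig(x_i)$ at the data-dependent function $g=\hat f_n-f$ tightly enough to absorb it into the left-hand side.

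The analytic input is the metric entropy of the RKHS unit ball $\mathcal B=\{g:\|g\|_{\mathcal N_\Phi(\Omega)}\le1\}$. Since $\mathcal N_\Phi(\Omega)=H^m(\Omega)$ with equivalent norms and $m>d/2$, classical entropy estimates for Sobolev balls give $\log N(\delta,\mathcal B,\|\cdot\|_\infty)\lesssim\delta^{-d/m}$ (see, e.g., \cite{Edmunds1996Function}), hence the same bound for $\|\cdot\|_n$. Writing $\alpha:=d/(2m)\in(0,1)$, the entropy integral obeys $\int_0^\delta\sqrt{\log N(u,\mathcal B,\|\cdot\|_n)}\,du\lesssim\delta^{1-\alpha}$, which is precisely the quantity a chaining bound consumes. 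Combining this with the sub-Gaussian tail (\ref{eq2.2}) through a Dudley-type chaining inequality and a peeling argument over dyadic shells of $\|g\|_n$, I would obtain the modulus-of-continuity estimate
\begin{eqnarray*}
\sup_{g\in\mathcal B,\;\|g\|_n>0}\;\frac{\big|\tfrac1n\sum_{i=1}^n e_ig(x_i)\big|}{\|g\|_n^{1-\alpha}}\;=\;O_p\!\big(n^{-1/2}\big),
\end{eqnarray*}
which by homogeneity gives $\big|\tfrac1n\sum_i e_ig(x_i)\big|=O_p(n^{-1/2})\,\|g\|_n^{1-\alpha}\|g\|_{\mathcal N_\Phi(\Omega)}^{\alpha}$ for every $g\in\mathcal N_\Phi(\Omega)$; this is essentially a standard empirical-process lemma (cf.\ \cite{geer2000empirical}).

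The last step is to substitute $g=\hat f_n-f$ into the basic inequality. With $T_n:=\|\hat f_n-f\|_n$, $N_n:=\|\hat f_n\|_{\mathcal N_\Phi(\Omega)}$, and $\|\hat f_n-f\|_{\mathcal N_\Phi(\Omega)}\le N_n+\|f\|_{\mathcal N_\Phi(\Omega)}$, the basic inequality reads, on an event of probability tending to one,
\begin{eqnarray*}
T_n^2+\lambda_nN_n^2\;\lesssim\;n^{-1/2}\,T_n^{1-\alpha}\big(N_n+\|f\|_{\mathcal N_\Phi(\Omega)}\big)^{\alpha}+\lambda_n\|f\|_{\mathcal N_\Phi(\Omega)}^2 .
\end{eqnarray*}
A three-way Young's inequality splits the cross term against $T_n^2$, against $\lambda_nN_n^2$, and against a constant of size $n^{-1}\lambda_n^{-d/(2m)}$; absorbing the first two pieces on the left and solving the resulting scalar inequality gives, after routine algebra, $T_n=O_p(\lambda_n^{1/2}\vee n^{-1/2}\lambda_n^{-d/(4m)})$ and $N_n=O_p(1\vee n^{-1/2}\lambda_n^{-(2m+d)/(4m)})$. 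The hypothesis $\lambda_n^{-1}=O(n^{2m/(2m+d)})$ is used precisely to ensure that the leftover term $n^{-2m/(2m+d)}$ generated by the noise evaluated at $f$ itself is dominated by $\lambda_n$, so that only the two displayed regimes survive. I expect the main obstacle to be the uniform empirical-process bound of the second paragraph: since $\hat f_n$ is data-dependent with RKHS norm not controlled a priori, one cannot simply fix a bounded class, so the peeling must be organized over dyadic shells of $\|g\|_n$ (and, if one does not assume $N_n=O_p(1)$ at the outset, also of $\|g\|_{\mathcal N_\Phi(\Omega)}$), with careful bookkeeping of how the $n^{-1/2}$ factor, the entropy exponent $1-\alpha$, and the sub-Gaussian concentration interact. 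The remaining steps are elementary.
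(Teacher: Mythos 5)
Your proposal is correct and follows essentially the same route as the paper's sketch (which defers details to Theorem 10.2 of \cite{geer2000empirical}): the basic inequality from optimality of $\hat f_n$, the empirical-process modulus bound $|\langle e,g\rangle_n|\lesssim n^{-1/2}\|g\|_n^{1-\frac{d}{2m}}\|g\|_{\mathcal{N}_\Phi(\Omega)}^{\frac{d}{2m}}$ obtained from Sobolev-ball entropy plus sub-Gaussian concentration (the paper's Lemma \ref{lemma:empirical}), and an elementary case analysis of the resulting scalar inequality (the paper's Lemma \ref{lemma:order}). Your account of where the hypothesis $\lambda_n^{-1}=O(n^{\frac{2m}{2m+d}})$ enters is likewise consistent with the paper's bias--variance discussion, so no further comparison is needed.
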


Because the main idea of proving Proposition \ref{th:existing} is also useful in establishing the improved rate of convergence, we give a sketch of proof for Proposition \ref{th:existing}. A detailed version can be found in Theorem 10.2 of \cite{geer2000empirical}.

\begin{proof}[Sketch of proof for Proposition \ref{th:existing}]
	The optimization condition (\ref{eq2.3}) implies the basic inequality
	\begin{eqnarray}\label{basicineq}
	\begin{split}
	&	\frac{1}{n}\sum_{i=1}^n(y_i-\hat{f}_n(x_i))^2+\lambda_n\|\hat{f}_n\|^2_{\mathcal{N}_\Phi(\Omega)}\\
	\leq& \frac{1}{n}\sum_{i=1}^n(y_i-f(x_i))^2+\lambda_n\|f\|^2_{\mathcal{N}_\Phi(\Omega)}.
	\end{split}
	\end{eqnarray}
	After some rearrangement, we can see that (\ref{basicineq}) is equivalent to
	\begin{eqnarray}\label{re1}
	\|\hat{f}_n-f\|^2_n+\lambda_n\|\hat{f}_n\|^2_{\mathcal{N}_\Phi(\Omega)}\leq 2\langle e,\hat{f}_n-f\rangle_n+\lambda_n\|f\|^2_{\mathcal{N}_\Phi(\Omega)},
	\end{eqnarray}
	where 
	\begin{eqnarray}\label{empiricalinnerproduct}
	\langle e,g\rangle_n:=\frac{1}{n}\sum_{i=1}^n e_i g_i(x_i).
	\end{eqnarray}
	It follows from a standard result in empirical process theory that
	\begin{eqnarray}\label{emp1}
	\frac{\langle e,\hat{f}_n-f\rangle_n}{\|\hat{f}_n-f\|^{1-\frac{d}{2m}}_n\|\hat{f}_n-f\|^{\frac{d}{2m}}_{\mathcal{N}_\Phi(\Omega)}}=O_p(n^{-1/2});
	\end{eqnarray}
	see Lemma \ref{lemma:empirical} in Appendix \ref{App:proof} for details. With some elementary algebraic calculations, also seeing Lemma \ref{lemma:order} and the proof of Theorem \ref{th:uniform} in the Appendix \ref{App:proof}, it is not hard to find that (\ref{re1}) and (\ref{emp1}) yield the desired results.
\end{proof}

%\begin{remark}
The smoothing parameter $\lambda_n$ makes a tradeoff between the bias and the variance of the estimator. If $\lambda_n$ decays no faster than $O_p(n^{-\frac{2m}{2m+d}})$, the bias term dominates the variance term and the rate of convergence under the empirical semi-norm is $O_p(\lambda^{1/2}_n)$.
On the other hand, if $\lambda_n$ decays faster than $O_p(n^{-\frac{2m}{2m+d}})$, the variance term dominates the bias term and the rate of convergence under the $L_2$ norm is $O_p(n^{-\frac{1}{2}}\lambda_n^{-\frac{d}{4m}})$. In this case, $\|\hat{f}_n-f\|_{\mathcal{N}_\Phi(\Omega)}$ may go to infinity. Therefore, to reach the best rates of convergence, one needs to balance the bias and the variance. By choosing $\lambda_n\sim n^{-\frac{2m}{2m+d}}$, one can obtain the best rates
\begin{eqnarray*}
	\|\hat{f}_n-f\|_n&=&O_p(n^{-\frac{m}{2m+d}}),\\
	\|\hat{f}_n-f\|_{\mathcal{N}_\Phi(\Omega)}&=&O_p(1).
\end{eqnarray*}
%In this case, the estimator is \textit{undersmoothed}. Using the basic inequality (\ref{basicineq}) and the arguments in the proof of Theorem \ref{th:uniform}, we can also obtain the order of magnitude of the error, which is given by
%\begin{eqnarray*}
%\begin{split}
%\|\hat{f}_n-f\|_n&=O_p(),\\
%\|\hat{f}_n-f\|_{\mathcal{N}_\Phi(\Omega)}&=O_p().
%\end{split}
%\end{eqnarray*}
%It can be seen that, as a consequence of undersmoothing, $\|\hat{f}_n-f\|_{\mathcal{N}_\Phi(\Omega)}$ may go to infinity. We refer to \cite{Gu2002Smoothing} for a related discussion.
%\end{remark}

An important question is whether the convergence results in (\ref{EC1}) imply a convergence under a more commonly used norm, like the $L_2$ norm. Such a result relies on whether the design points $\{x_1,\ldots,x_n\}$ are allocated in a space-filling manner. To address this point, we introduce the concept of quasi-uniformity \cite{brenner2007mathematical,utreras1988convergence}.

\begin{definition}
	For a set of design points $\{x_1,\ldots,x_n\}\subset\Omega$, define its \textit{fill distance} as
	\begin{eqnarray*}
		h_n=\max_{x\in\Omega}\min_{i}\|x-x_i\|,
	\end{eqnarray*}
	and its \textit{separation distance} as
	\begin{eqnarray*}
		q_n=\min_{i\neq j}\|x_j-x_i\|,
	\end{eqnarray*}
	where $\|\cdot\|$ denotes the Euclidean distance. Call a design sequence $x_1,\ldots,x_n,\ldots$ \textit{quasi-uniform}, if there exists a universal constant $B>0$ such that
	\begin{eqnarray}\label{quasiuniform}
	h_n/q_n\leq B
	\end{eqnarray}
	holds for all $n>1$.
\end{definition}

For any $\{x_1,\ldots,x_n\}\subset\Omega$, the balls centered at $x_i$'s with radius $q_n/2$ are disjoint. By comparing the volume of these balls and that of $\Omega$ we find that, the inequality
\begin{eqnarray}\label{qnbound}
n V_d (q_n/2)^d\leq 2 Vol(\Omega)
\end{eqnarray}
holds if $q_n$ is sufficiently small, where $V_d$ denotes the volume of $d$-dimensional unit ball, $Vol(\Omega)$ denotes the volume of $\Omega$.
If $\{x_1,\ldots,x_n\}$ also satisfies (\ref{quasiuniform}), (\ref{qnbound}) yields
\begin{eqnarray}\label{hbound}
h_n\leq \frac{B}{2}\left(\frac{2Vol(\Omega)}{V_d}\right)^{1/2}n^{-1/d}=:B'n^{-1/d}.
\end{eqnarray}

Under certain conditions, the empirical semi-norm and the $L_2$ norm are equivalent. The following Proposition is Lemma 3.4 of \cite{utreras1988convergence}.

\begin{proposition}\label{Prop:utreras}
	Suppose the design sequence is quasi-uniform.
	Then there exists a constant $C$ (depending only on $m$, $d$, $\Omega$ and $B$) and $h_0$ such that for any $ g \in H^m(\Omega)$ and $h_n\leq h_0$, we have
	\begin{eqnarray}
	\| g\|^2_{L_2(\Omega)}\leq C\left\{\| g\|^2_n+ h_n^{2m}\| g\|^2_{H^m(\Omega)}\right\}.
	\label{eq2.7}
	\end{eqnarray}
\end{proposition}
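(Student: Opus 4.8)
The plan is to prove a local version of the estimate on patches of diameter comparable to $h_n$ and then add these local bounds over a bounded-overlap cover of $\Omega$; this is the standard route to ``sampling inequalities''. First I would cover $\Omega$ by sets $P_1,\dots,P_N$, each of diameter comparable to $h_n$ (with a proportionality factor chosen large enough, depending only on $m,d$ in the interior of $\Omega$ and additionally on $\Omega$ near its boundary), such that each $P_k$ is star-shaped with respect to a ball of radius comparable to $h_n$, the $P_k$ overlap at most a bounded number of times, and each $P_k$ contains at least one and at most a bounded number of design points. In the interior one takes the cubes of a mesh of side comparable to $h_n$; near $\partial\Omega$ one uses that a bounded convex domain satisfies a uniform interior cone condition to fit, inside $\Omega$, a truncated cone of height comparable to $h_n$ at each point of an $O(h_n)$-net of $\partial\Omega$, the geometric constants then depending on the minimal cone aperture of $\Omega$. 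That each $P_k$ meets the design is forced by the fill-distance bound $h_n=\max_{x\in\Omega}\min_i\|x-x_i\|$; that each contains $O(1)$ design points follows from the separation distance together with quasi-uniformity, exactly as in (\ref{qnbound}).

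On a single patch, let $P^\sharp\supset P$ be a fixed enlargement of comparable size and let $p$ be the averaged Taylor polynomial of $g$ on $P^\sharp$ of degree less than $m$. The Bramble--Hilbert lemma on $P^\sharp$, and the Sobolev embedding $H^m\hookrightarrow C^0$ (which uses $m>d/2$), give in their scaled forms
\[
\|g-p\|_{L_2(P^\sharp)}\le C\,h_n^{m}\,|g|_{H^m(P^\sharp)},\qquad \|g-p\|_{L_\infty(P^\sharp)}\le C\,h_n^{\,m-d/2}\,|g|_{H^m(P^\sharp)}.
\]
The polynomial part is controlled by the design values through a norming inequality: Markov's inequality for multivariate polynomials of bounded degree shows that there are constants $\gamma_0,C_0$, depending only on $m$, $d$ and the shape of the patch, such that whenever the design points lying in $P^\sharp$ have fill distance there at most $\gamma_0$ times the size of $P^\sharp$, one has $\|q\|_{L_\infty(P)}\le C_0\max_{x_i\in P^\sharp}|q(x_i)|$ for every polynomial $q$ of degree less than $m$. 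Choosing the proportionality factor in the cover large enough that this fill-distance condition holds, and using $|p(x_i)|\le|g(x_i)|+\|g-p\|_{L_\infty(P^\sharp)}$, we obtain
\[
\|g\|_{L_2(P)}^2\le C\Big(h_n^{2m}\,|g|_{H^m(P^\sharp)}^2+h_n^{d}\sum_{x_i\in P^\sharp}|g(x_i)|^2\Big).
\]

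Summing over $k$ then finishes the proof. Since the enlargements $P_k^\sharp$ still have bounded overlap, $\sum_k|g|_{H^m(P_k^\sharp)}^2\le C\|g\|_{H^m(\Omega)}^2$ --- first extending $g$ to a neighbourhood of $\Omega$ by a Sobolev extension operator, which exists because a bounded convex domain has Lipschitz boundary --- and $\sum_k\sum_{x_i\in P_k^\sharp}|g(x_i)|^2\le C\sum_{i=1}^n|g(x_i)|^2=Cn\|g\|_n^2$; moreover $\bigcup_k P_k\supseteq\Omega$. This yields $\|g\|_{L_2(\Omega)}^2\le C\big(h_n^{2m}\|g\|_{H^m(\Omega)}^2+h_n^{d}n\,\|g\|_n^2\big)$, and since the design is quasi-uniform, (\ref{quasiuniform}) and (\ref{qnbound}) give $h_n^{d}n\le B^{d}q_n^{d}n\le C(\Omega,B)$, which is exactly the claimed inequality; one takes $h_0$ small enough that (\ref{qnbound}), the boundary cone construction, and the fill-distance condition in the norming step are all in force.

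The routine parts above are not the difficulty. The delicate step is the geometric bookkeeping near $\partial\Omega$: one must cover the $O(h_n)$-boundary layer by patches that are simultaneously contained in $\Omega$ (so that they genuinely carry design points), of bounded aspect ratio (so that Bramble--Hilbert and Markov hold with scale-invariant constants), and of bounded overlap and bounded total number, while arranging that the enlargement $P^\sharp$ used in the norming step stays inside $\Omega$ or is linked to an interior region of comparable size. Convexity makes this work through the uniform interior cone condition, at the price of constants depending on the cone aperture of $\Omega$, which is why $C$ and $h_0$ may depend on $\Omega$. The passage from integer to fractional $m$ requires no new idea: one uses polynomials of degree $\lceil m\rceil-1$ and the fractional-order versions of the Bramble--Hilbert lemma and of the Sobolev embedding.
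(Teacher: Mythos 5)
Your argument is essentially correct, but note how it relates to the paper: the paper does not prove this proposition at all --- it imports it verbatim as Lemma 3.4 of the cited Utreras (1988) reference --- so what you have written is a self-contained reconstruction of the standard ``sampling inequality'' proof (local averaged-Taylor/Bramble--Hilbert estimates plus a Markov-inequality norming argument on patches of size comparable to $h_n$, summed over a bounded-overlap cover), which is exactly the style of argument used in that literature (Duchon, Utreras, and later Wendland--Rieger and Narcowich--Ward--Wendland). The payoff of your route is transparency: one sees explicitly that $C$ depends only on $m$, $d$, $\Omega$ (through its cone/extension constants) and, via the step $n h_n^d \le B^d\, n q_n^d \le C(\Omega,B)$ from (\ref{quasiuniform}) and (\ref{qnbound}), on the quasi-uniformity constant $B$ --- which is precisely the uniformity the paper needs later. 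Three loose ends are worth tightening, though none is fatal. First, the boundary bookkeeping you flag is genuinely where the work is, and it is resolved by a standard covering lemma for cone-condition (here convex) domains; but your phrase ``bounded total number'' of boundary patches should be ``bounded overlap'' --- the number of patches necessarily grows like $h_n^{1-d}$, and only the covering multiplicity is bounded, which is all the summation uses. Second, the norming step needs the small observation that every point of a patch is within a fixed multiple of $h_n$ of a design point \emph{lying in the enlarged patch} $P^\sharp$: the nearest design point to a boundary point of $P^\sharp$ may fall outside $P^\sharp$, but moving a couple of fill distances toward the patch center (possible since $\Omega$ is convex) produces a design point inside $P^\sharp$ at distance $O(h_n)$, so the fill-distance-to-patch-size ratio can indeed be made smaller than your $\gamma_0$ by enlarging the proportionality factor. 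Third, for fractional $m$ the summation $\sum_k |g|^2_{H^m(P_k^\sharp)}\le C\|g\|^2_{H^m}$ should be justified via the Sobolev--Slobodeckij double integral and bounded overlap of the products $P_k^\sharp\times P_k^\sharp$ (or via the Stein extension you invoke); with that said, the fractional Bramble--Hilbert and embedding statements you cite do hold, so the proof goes through as outlined.
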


Corollary \ref{Coro:rate} gives the standard results for the rates of convergence of ridge kernel regression, which is a direct consequence of Proposition \ref{th:existing}, (\ref{hbound}) and Proposition \ref{Prop:utreras}.

\begin{corollary}\label{Coro:rate}
	Under the condition of Proposition \ref{th:existing}, suppose the design sequence is quasi-uniform. Then the estimator $\hat{f}_n$ given by (\ref{eq2.3}) satisfies
	\begin{eqnarray}
	\begin{split}\label{EC2}	
	\|\hat{f}_n-f\|_{L_2(\Omega)}=O_p(\lambda_n^{1/2}\vee n^{-\frac{1}{2}}\lambda_n^{-\frac{d}{4m}}),\\
	\|\hat{f}_n\|_{\mathcal{N}_\Phi(\Omega)}=O_p(1 \vee n^{-\frac{1}{2}}\lambda_n^{-\frac{2m+d}{4m}}).
	\end{split}
	\end{eqnarray}
\end{corollary}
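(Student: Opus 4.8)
\textbf{Proof proposal for Corollary~\ref{Coro:rate}.}
The plan is to deduce both assertions in (\ref{EC2}) from Proposition~\ref{th:existing} by trading the empirical semi-norm $\|\cdot\|_n$ for the $L_2(\Omega)$ norm via Proposition~\ref{Prop:utreras}, and by using (\ref{hbound}) to control the fill distance. The second bound in (\ref{EC2}) is word-for-word the second bound in (\ref{EC1}), so there is nothing to prove there. For the first bound, I would apply Proposition~\ref{Prop:utreras} to $g=\hat f_n-f$. This is legitimate: $f\in H^m(\Omega)$ by hypothesis, $\hat f_n\in\mathcal{N}_\Phi(\Omega)=H^m(\Omega)$ (with equivalent norms) by (\ref{representer}), and, since the design is quasi-uniform, (\ref{hbound}) gives $h_n\le B'n^{-1/d}\to 0$, so $h_n\le h_0$ for all large $n$. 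Hence
\begin{eqnarray*}
\|\hat f_n-f\|_{L_2(\Omega)}^2\le C\bigl\{\,\|\hat f_n-f\|_n^2+h_n^{2m}\|\hat f_n-f\|_{H^m(\Omega)}^2\,\bigr\}.
\end{eqnarray*}

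Next I would bound the two terms on the right-hand side with Proposition~\ref{th:existing}. The first term is already controlled: $\|\hat f_n-f\|_n^2=O_p(\lambda_n\vee n^{-1}\lambda_n^{-d/(2m)})$, which is exactly the square of the claimed rate. For the second term, the triangle inequality together with the equivalence of $\|\cdot\|_{H^m(\Omega)}$ and $\|\cdot\|_{\mathcal{N}_\Phi(\Omega)}$ gives $\|\hat f_n-f\|_{H^m(\Omega)}\lesssim\|\hat f_n\|_{\mathcal{N}_\Phi(\Omega)}+\|f\|_{\mathcal{N}_\Phi(\Omega)}=O_p(1\vee n^{-1/2}\lambda_n^{-(2m+d)/(4m)})$, so combining with (\ref{hbound}),
\begin{eqnarray*}
h_n^{2m}\|\hat f_n-f\|_{H^m(\Omega)}^2=O_p\bigl(n^{-2m/d}\vee n^{-2m/d-1}\lambda_n^{-(2m+d)/(2m)}\bigr).
\end{eqnarray*}

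The last step — and the only one requiring any work — is to check that this quantity is $O_p(\lambda_n\vee n^{-1}\lambda_n^{-d/(2m)})$, so that it is absorbed into the first term and the display collapses to $\|\hat f_n-f\|_{L_2(\Omega)}^2=O_p(\lambda_n\vee n^{-1}\lambda_n^{-d/(2m)})$, which is (\ref{EC2}). This is a short comparison of exponents using the standing assumption $\lambda_n^{-1}=O(n^{2m/(2m+d)})$, i.e. $\lambda_n\gtrsim n^{-2m/(2m+d)}$: one has $n^{-2m/d}\le n^{-2m/(2m+d)}\lesssim\lambda_n$, while the ratio of $n^{-2m/d-1}\lambda_n^{-(2m+d)/(2m)}$ to $n^{-1}\lambda_n^{-d/(2m)}$ equals $n^{-2m/d}\lambda_n^{-1}=O(n^{-4m^2/(d(2m+d))})=o(1)$. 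I do not expect a genuine obstacle here; the only points needing care are this exponent bookkeeping and the observation that the constants $C$ and $B'$ in Proposition~\ref{Prop:utreras} and (\ref{hbound}) are uniform over quasi-uniform design sequences sharing the constant $B$, so that the $O_p$ bounds indeed hold uniformly.
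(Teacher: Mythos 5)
Your proposal is correct and follows exactly the route the paper intends: the paper states Corollary \ref{Coro:rate} as a direct consequence of Proposition \ref{th:existing}, the fill-distance bound (\ref{hbound}), and Proposition \ref{Prop:utreras}, which is precisely your argument, and your exponent bookkeeping (using $\lambda_n^{-1}=O(n^{2m/(2m+d)})$ to absorb the $h_n^{2m}$ term) is accurate. Nothing further is needed.
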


\subsubsection{Improved Rates of Convergence}

%Since $\Phi$ is the reproducing kernel of $\mathcal{N}_{\Phi}(\Omega)$, we have
%\begin{eqnarray}\nonumber
%f^2(\bm x_i)=<f(\cdot),\Phi(\cdot,\bm x_i)>^2_{\mathcal{N}_{\Phi}(\Omega)}\leq \| f\|^2_{\mathcal{N}_{\Phi}(\Omega)}\| \Phi(\cdot,\bm x_i)\|^2_{\mathcal{N}_{\Phi}(\Omega)}=\| f\|^2_{\mathcal{N}_{\Phi}(\Omega)},
%\label{eq2.8}
%\end{eqnarray}
%for every $f\in \mathcal{N}_{\Phi}(\Omega)$. Combining with Lemma 2, we have
%\begin{eqnarray}
%\begin{aligned}
%\| f(\cdot)\|^2_n&\leq \| f\|^2_{\mathcal{N}_{\Phi}(\Omega)},\\
%\| f\|^2_{L_2(\Omega)}&\leq C\{(1+ h^{2m})\| f\|^2_{\mathcal{N}_{\Phi}(\Omega)}.
%\end{aligned}
%\label{eq2.8}
%\end{eqnarray}

%Lemma 3.  \emph{Suppose $f \in \mathcal{N}_{\Phi}(\Omega)$, and $\mathcal{N}_{\Phi}(\Omega)$ can be embeded into the Sobolev space $\mathcal{H}^{m}(\Omega)\subset C(\Omega)$. If there exists constants $c_1>0$ and $c_2>0$, s.t.}
%\begin{eqnarray}
%\| f(\cdot)\|_n^2+ c_1\lambda_n \| f(\cdot)\|_{\mathcal{N}_{\Phi}(\Omega)}^2\leq c_2 \lambda_n{\| f(\cdot)\|_n}+O_p(n^{-1/2})\| f(\cdot)\|_n^{1-\frac{d}{2m}} \| f(\cdot)\|_{\mathcal{N}_{\Phi}(\Omega)}^{\frac{d}{2m}},
%\end{eqnarray}
%and if $\lambda_n=O_p(n^{-\frac{2m}{4m+d}})$, then we have

We can regard the rates of convergence (\ref{EC2}) as a stochastic version of the error bound (\ref{ee1}). They are both standard convergence results under their respective settings. In view of the improved rate of convergence in interpolation discussed in Section \ref{sec:SDA}, we also expect an improved rate of convergence for the regression problem (\ref{NP}) by imposing the same assumption that there exists $v\in L_2(\Omega)$ so that (\ref{T}) holds.

Now we give more details about the intuition of why improved rates of convergence can be obtained. Note the identify
\begin{eqnarray}
\| f\|^2_{\mathcal{N}_{\Phi}(\Omega)}-\| \hat f_n\|^2_{\mathcal{N}_{\Phi}(\Omega)}
=2\langle  f, f-\hat f_n\rangle_{\mathcal{N}_{\Phi}(\Omega)}-\| f- \hat  f_n\|^2_{\mathcal{N}_{\Phi}(\Omega)},
%&=&-\| f- \hat  f_n\|^2_{\mathcal{N}_{\Phi}(\Omega)}+2<v, f- \hat f_n>_{L_2(\Omega)},
%&\leq& -\| f(\cdot)- \hat  f(\cdot)\|^2_{\mathcal{N}_{\Phi}(\Omega)}+2\| v\|_{L_2(\Omega)}\| f(\cdot)-\hat f(\cdot)\|_{L_2(\Omega)}.
\label{eq2.20}
\end{eqnarray}
which, together with the basic inequality (\ref{re1}), yields
\begin{eqnarray}\label{variation}
\begin{split}
&\|\hat{f}_n-f\|^2_n+\lambda_n\|\hat{f}_n-f\|^2_{\mathcal{N}_\Phi(\Omega)}\\
&\leq 2\langle e,\hat{f}_n-f\rangle_n+2\lambda_n\langle  f, f-\hat f_n\rangle_{\mathcal{N}_{\Phi}(\Omega)}.
\end{split}
\end{eqnarray}
Invoking identity (\ref{normequality}) and the Cauchy-Schwarz inequality, we obtain
\begin{equation*}
\langle  f, f-\hat f_n\rangle_{\mathcal{N}_{\Phi}(\Omega)}=\langle  v, f-\hat f_n\rangle_{L_2(\Omega)}\leq \|v\|_{L_2(\Omega)}\|\hat{f}_n-f\|_{L_2(\Omega)},
\end{equation*}
which, together with (\ref{variation}), implies
\begin{eqnarray}\label{improvedbasic}
\begin{split}
&\|\hat{f}_n-f\|^2_n+\lambda_n\|\hat{f}_n-f\|^2_{\mathcal{N}_\Phi(\Omega)}\\
&\leq 2\langle e,\hat{f}_n-f\rangle_n+2\lambda_n\|v\|_{L_2(\Omega)}\|\hat{f}_n-f\|_{L_2(\Omega)}.
\end{split}
\end{eqnarray}
We call (\ref{improvedbasic}) the \textit{improved basic inequality}, because it gives a refined version of the basic inequality (\ref{re1}). Compared to (\ref{re1}), the right-hand side of (\ref{improvedbasic}) is significantly deflated, because $\|\hat{f}_n\|^2_{\mathcal{N}_\Phi(\Omega)}$ in (\ref{re1}) has the order $O_p(1)$ according to Proposition \ref{th:existing}, while in (\ref{improvedbasic}), $\|\hat{f}_n-f\|_{L_2(\Omega)}=o_p(1)$ if $\lambda_n=o_p(1)$.
This explains why we can expect improved rates of convergence for the two terms on the left-hand side of (\ref{improvedbasic}). These rates can be obtained by employing additional algebraic calculations. We summarize our findings in Proposition \ref{th:improvedrate}. %and deliver the remaining lines of the proof to Appendix \ref{App:proof}.

\begin{proposition}\label{th:improvedrate}
	Suppose there exists $v\in L_2(\Omega)$, such that
	\begin{eqnarray}
	f(x) =\int_\Omega \Phi(x-t)v (t)dt.
	\label{eq2.14}
	\end{eqnarray}
	Moreover, suppose the sequence of design points is quasi-uniform and the random error $e_i$'s are sub-Gaussian satisfying (\ref{eq2.2}).
	Then
	\begin{eqnarray}
	\begin{aligned}
	\| \hat f_n-f\|_{n}&=O_p\left(\lambda_n \vee n^{-\frac{1}{2}}\lambda_n^{-\frac{d}{4m}}\right),\\
	\| \hat f_n-f\|_{ \mathcal{N}_{\Phi}(\Omega)}&=O_p\left(\lambda^{1/2}_n\vee n^{-\frac{1}{2}}\lambda_n^{-\frac{2m+d}{4m}}\right)
	\end{aligned}.
	\label{eq2.15}
	\end{eqnarray}
\end{proposition}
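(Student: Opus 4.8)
The plan is to repeat the argument behind the sketch of Proposition~\ref{th:existing}, but to start from the \emph{improved} basic inequality~(\ref{improvedbasic}) rather than from~(\ref{re1}). Write $a:=\|\hat f_n-f\|_n^2$ and $b:=\lambda_n\|\hat f_n-f\|_{\mathcal N_\Phi(\Omega)}^2$, so that the left-hand side of~(\ref{improvedbasic}) is exactly $a+b$. The goal is to bound the two terms on its right-hand side by $\tfrac12(a+b)+R_n$ with $R_n=O_p\big(\lambda_n^2\vee n^{-1}\lambda_n^{-d/2m}\big)$; absorbing the $\tfrac12(a+b)$ (exactly as in the proof of Proposition~\ref{th:existing}) then gives $a+b=O_p(R_n)$, and reading off $a$ and $b$ separately yields~(\ref{eq2.15}). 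Throughout I use that the design is quasi-uniform, so $h_n\le B'n^{-1/d}$ by~(\ref{hbound}) and in particular $h_n\le h_0$ for $n$ large.

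The term not present in Proposition~\ref{th:existing} is $2\lambda_n\|v\|_{L_2(\Omega)}\|\hat f_n-f\|_{L_2(\Omega)}$, and the first step is to trade the $L_2$ norm for the empirical semi-norm plus a small amount of RKHS norm. By Proposition~\ref{Prop:utreras} and the standing equivalence $\mathcal N_\Phi(\Omega)=H^m(\Omega)$, there is a constant $C_1$ with
\[
\|\hat f_n-f\|_{L_2(\Omega)}\le C_1\big(\|\hat f_n-f\|_n+h_n^{m}\|\hat f_n-f\|_{\mathcal N_\Phi(\Omega)}\big)\le C_1\big(a^{1/2}+(B')^{m}\lambda_n^{-1/2}n^{-m/d}b^{1/2}\big).
\]
Multiplying by $2\lambda_n\|v\|_{L_2(\Omega)}$ and applying Young's inequality $xy\le\varepsilon x^2+(4\varepsilon)^{-1}y^2$ to each of the two resulting products --- once to the pair $\big(a^{1/2},\lambda_n\big)$ to peel off a small multiple of $a$, and once to the pair $\big(\lambda_n^{1/2}\|\hat f_n-f\|_{\mathcal N_\Phi(\Omega)},\lambda_n^{1/2}n^{-m/d}\big)$ to peel off a small multiple of $b$ --- bounds this term by $\tfrac14 a+\tfrac14 b+C_2\big(\lambda_n^2+\lambda_n n^{-2m/d}\big)$.

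For the noise term I would invoke~(\ref{emp1}) (Lemma~\ref{lemma:empirical}): $\langle e,\hat f_n-f\rangle_n=O_p(n^{-1/2})\,\|\hat f_n-f\|_n^{1-d/2m}\|\hat f_n-f\|_{\mathcal N_\Phi(\Omega)}^{d/2m}$, which in terms of $a$ and $b$ reads $O_p\big(n^{-1/2}\lambda_n^{-d/4m}\big)\,a^{\,1/2-d/4m}\,b^{\,d/4m}$. Since $m>d/2$ the exponents $\tfrac12,\ \tfrac12-\tfrac{d}{4m},\ \tfrac{d}{4m}$ are nonnegative and sum to $1$, so two applications of Young's inequality (the elementary algebra in Lemma~\ref{lemma:order}) give $2\langle e,\hat f_n-f\rangle_n\le\tfrac14 a+\tfrac14 b+C_3\,n^{-1}\lambda_n^{-d/2m}$, with the coefficients of $a,b$ as small as we wish at the cost of enlarging $C_3$. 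Plugging both bounds into~(\ref{improvedbasic}) and absorbing yields $a+b=O_p\big(\lambda_n^2\vee n^{-1}\lambda_n^{-d/2m}\vee\lambda_n n^{-2m/d}\big)$. Finally, $m>d/2$ forces $\lambda_n n^{-2m/d}$ to be dominated by the other two terms: if $\lambda_n\ge n^{-2m/d}$ then $\lambda_n n^{-2m/d}\le\lambda_n^2$, and if $\lambda_n<n^{-2m/d}$ then $\lambda_n^{1+d/2m}\le n^{-1-2m/d}\le n^{-1+2m/d}$, i.e.\ $\lambda_n n^{-2m/d}\le n^{-1}\lambda_n^{-d/2m}$. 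Hence $a=O_p\big(\lambda_n^2\vee n^{-1}\lambda_n^{-d/2m}\big)$ and $b=O_p\big(\lambda_n^2\vee n^{-1}\lambda_n^{-d/2m}\big)$; dividing the second bound by $\lambda_n$ and taking square roots gives exactly~(\ref{eq2.15}).

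The only genuinely substantial ingredient is the empirical-process estimate~(\ref{emp1}) itself: it requires a metric-entropy bound for bounded balls of $H^m(\Omega)$, a chaining argument, and a peeling (slicing) step converting the pointwise sub-Gaussian tail~(\ref{eq2.2}) into the stated ratio bound uniformly over $\hat f_n-f$ --- all of which I would defer to Lemma~\ref{lemma:empirical}. Everything else is Young-inequality bookkeeping; the one place that requires a moment's thought is verifying that the extra $L_2$ term produced by the improved basic inequality does not spoil the bias--variance balance, which is precisely where the hypothesis $m>d/2$ is used.
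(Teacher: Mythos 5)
Your argument is correct and follows essentially the same route as the paper, which proves this proposition as the non-uniform special case of Theorem \ref{th:uniform}: improved basic inequality (\ref{improvedbasic}), conversion of the $L_2$ norm via Proposition \ref{Prop:utreras} and (\ref{hbound}), the empirical-process bound of Lemma \ref{lemma:empirical}, and elementary absorption. The only difference is bookkeeping: you absorb the $n^{-m/d}$-RKHS contribution by Young's inequality and then show the resulting extra term $\lambda_n n^{-2m/d}$ is dominated, whereas the paper handles the same issue by a three-case split and Lemma \ref{lemma:order}; both are sound.
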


\begin{proof}
	This result is a special case of Corollary \ref{coro:uniform} in Section \ref{sec:gaussian}.
\end{proof}
%Suppose \emph{$f \in \mathcal{N}_{\Phi}(\Omega)$, and $\mathcal{N}_{\Phi}(\Omega)$ can be embeded into the Sobolev space $\mathcal{H}^{m}(\Omega)\subset C(\Omega)$. The observation error is sub-Gaussian (\citep{geer2000empirical})
%	\begin{eqnarray}\nonumber
%	\max_{i=1,2,\ldots,n} K^2(E\exp{(|e_i|^2/K^2)}-1)\leq \sigma_0^2.
%	\end{eqnarray}

\begin{remark}
	The improved rates in Proposition \ref{th:improvedrate} are known; see \cite{blanchard2018optimal,gu2002smoothing,dicker2017kernel,guo2017learning,lin2017distributed} and the references therein. Despite these known rates, the conditions in Proposition \ref{th:improvedrate} differs from these works. These works focus on kernels represented by eigenvalues and eigenfunctions, and random designs. We consider Mat\'ern kernels and quasi-uniform designs, which are widely used in engineer and computer experiment applications. Also, the mathematical tools used here are different from those in the above works, and our analysis yields a stronger result, given in Theorem \ref{th:uniform}, which leads to an asymptotic theory for the K-O calibration estimator.
\end{remark}

%\begin{remark}
In Proposition \ref{th:improvedrate}, since the design sequence is quasi-uniform, similar to Corollary \ref{Coro:rate}, we can apply Proposition \ref{Prop:utreras} to derive
\begin{eqnarray}
\| \hat f_n-f\|_{L_2(\Omega)}&=O_p\left(\lambda_n \vee n^{-\frac{1}{2}}\lambda_n^{-\frac{d}{4m}}\right).
\end{eqnarray}

\section{Calibration of Computer Models}\label{sec:gaussian}

In this section, we use the improved convergence theory established in Section \ref{sec ir} to study the asymptotic theory for the K-O method for the calibration of computer models.

In computer experiments, calibration is the activity of identifying the computer model parameters by matching the computer and physical outputs. Consider a physical experiment, with a vector of input variable denoted as $x$. To reduce the cost of the physical experiment, researchers often conduct a computer simulation to mimic the physical system as well. Usually, the computer code input consists of the physical input $x$ and model parameters $\theta$. The model parameters are not observed in the physical experiment; they commonly represent certain intrinsic attributes of the system. Here we consider only deterministic computer experiments, i.e., the computer output is a deterministic function of the inputs, denoted by $y^s(x,\theta)$.

In K-O's approach, the physical experimental data are modeled as
\begin{eqnarray}\label{KO1}
y_i=\xi(x_i)+e_i, i=1,\ldots,n 
\end{eqnarray}
where $\xi$ is an underlying function called the true process, $x_i$'s are fixed input points, and $e_i$'s are independent and identically distributed random error with mean zero.

Because the computer models are built under inevitable simplification and approximation, their outputs cannot coincide with the true process. \cite{kennedy2001bayesian} used the following model to link these functions
\begin{eqnarray}\label{theta0}
\xi(x)=y^s(x,\theta_0)+\delta(x), 
\end{eqnarray}
where $\theta_0$ is the ``optimal choice'' of the model parameter, and $\delta$ denotes the discrepancy function. The model (\ref{theta0}) is clearly non-identifiable, because both $\theta_0$ and $\delta$ are unknown. We refer to \cite{plumlee2015calibrating,tuo2019adjustments,tuo2015efficient,tuo2016theoretical,tuo2016prediction} for related theoretical discussions regarding the identifiability. Kennedy and O'Hagan \cite{kennedy2001bayesian} proposed to impose a Gaussian process prior on $\delta$ to facilitate the estimation of $\theta_0$.

Given the widespread use of the K-O method in computer experiments and related scientific and engineering problems, understanding the asymptotic properties of this method is of interest.
In this work, we \textit{do not} assume that $\delta$ (or $(\xi,y^s)$) is random, that is, we regard the Gaussian process modeling technique in the K-O's approach only as a computational method. This nonrandom model setting can be justified as follows. Because the computer code is deterministic, $y^s$ should be nonrandom. Also, the true process $\xi$ is usually presumed as nonrandom in industrial statistics, for example, in the response surface methodology \cite{wu2008experiments}. The main objective of this section is to study the asymptotic behavior of the K-O's calibration estimator under the above deterministic setting. Our findings in the section \textit{should not} be interpreted under the usual framework of Gaussian process regression, where the underlying function is truly random.

%A major application of the improved convergence theory established in Section \ref{sec ir} is the maximum likelihood estimation of Gaussian process models.
%Gaussian process modeling has been widely used in spatial statistics \citep{Cressie2015Statistics}, computer experiments \citep{santner2013design} and machine learning \citep{rasmussen2006gaussian}. 

\subsection{A frequentist version of the Kennedy-O'Hagan's approach}

We consider estimating $\theta$ by maximizing the following ``likelihood function'':
\begin{multline}\label{likelihood}
L(\theta,\sigma^2,\tau^2)=\det(\sigma^2\mathbf{\Phi}+\tau^2 I)^{-1/2}\\
\times\exp\left\{-\frac{1}{2}(Y-y^s(X;\theta))^T(\sigma^2\mathbf{\Phi}+\tau^2 I)^{-1}(Y-y^s(X;\theta))\right\},
\end{multline}
where $\mathbf{\Phi}=(\Phi(x_i,x_j))_{i j}$, $Y=(y_1,\ldots,y_n)^T$, $y^s(X;\theta)=(y^s(x_1;\theta),\ldots,y^s(x_n;\theta))^T$ and $I$ denotes the identity matrix.

Under some extra conditions, (\ref{likelihood}) is indeed the likelihood function induced by the K-O approach. First, we suppose that $e_i$'s in (\ref{KO1}) follow the normal distribution $N(0,\tau^2)$ \footnote{In our theoretical analysis in Theorems \ref{th:uniform}-\ref{th:calibration}, we relax this assumption by incorporating sub-Gaussian noise.}, and we impose a Gaussian process prior on $y^s$. Second, suppose this Gaussian process has mean zero and covariance function $\sigma^2 \Phi(\cdot,\cdot)$. Here we assume that $\Phi$ is given. Then it is easily shown that the likelihood function of $(\theta,\sigma^2,\tau^2)$ is (\ref{likelihood}).

The MLEs of $\sigma^2$ and $\tau^2$ in (\ref{likelihood}) do not have explicit expressions. To ease the mathematical treatments, we suggest choosing the ratio $\lambda=\tau^2/(n\sigma^2)$ in a non-data-driven manner. We will show that, a deterministic choice of $\lambda$ (depending on $n$) can sufficiently lead to a desired asymptotic theory. Once $\lambda$ is given, we have the following simplified expression of $\hat{\theta}$:
%
%(\textbf{Rewrite}) Clearly, in (\ref{likelihood}), $\sigma^2,\tau^2$ are not identifiable, only the ratio $\sigma^2/\tau^2$ is. Write $\lambda=\tau^2/(n\sigma^2)$. Here we consider a simplified approach. Suppose $\lambda$ is given. We maximize the likelihood function $L$ with respect to $\theta$. Then, the maximizer, denoted by $\hat{\theta}$, is
\begin{eqnarray}\label{MLE}
\hat{\theta}=\operatorname*{argmin}_{\theta\in\Theta}(Y-y^s(X;\theta))^T(\mathbf{\Phi}+n\lambda I)^{-1}(Y-y^s(X;\theta)).
\end{eqnarray}

Our goal is to develop an asymptotic theory for $\hat{\theta}$, under the assumption that $\xi$ and $y^s$ are \textit{deterministic} functions. We call $\hat{\theta}$ the \textit{frequentist estimator of the K-O approach}. Of course, we adopt a totally different model setting compared with \cite{kennedy2001bayesian}. Computationally, the two methods are also different in the following aspects.
\begin{enumerate}
	\item In \cite{kennedy2001bayesian}, prior distributions are imposed on the parameters $\theta,\sigma^2,\tau^2$, and possibly the hyper-parameters associated with $\Phi$. In this work, we do not impose those distributions. Also, we do not introduce extra hyper-parameters on the kernel $\Phi$.
	\item In \cite{kennedy2001bayesian}, Bayesian analysis is conducted by calculating the posterior distribution. In this work, we focus on the MLE.
	\item In \cite{kennedy2001bayesian}, both $\sigma^2$ and $\tau^2$ are estimated from the data. In this work, we choose $\lambda=\tau^2/(n\sigma^2)$ in a non-data-driven manner to facilitate our mathematical analysis.
	\item In \cite{kennedy2001bayesian}, the computer model can be expensive to run, so that a surrogate model is introduced to reconstruct $y^s$. In this work, we assume that $y^s$ is a known function. This assumption is reasonable when the computer model is inexpensive.
\end{enumerate}

%In this section, we are interested in the limiting behavior of $\hat{\theta}$ as $n$ tends to infinity while $\lambda$ is suitably chosen. This problem is mentioned in the spatial statistics literature (e.g., \citealp{stein2012interpolation}), but the answer has not appeared yet so far. In this work, we study the asymptotic properties of $\hat{\theta}$ under a traditional nonparametric regression framework. Instead of thinking $z(x)$ as a sample path of a Gaussian process, we regard $z(x)$ as a \textit{deterministic but unknown} function, which lies in the function space $\mathcal{N}_\Phi(\Omega)$. Because $\mathcal{N}_\Phi(\Omega)$ consists of functions with certain degree of smoothness, we essentially make an assumption about the smoothness of $z(x)$. We believe that, this model setting is appropriate for many problems in spatial statistics, computer experiments, and machine learning.

\subsection{Asymptotic theory}
The MLE estimator $\hat{\theta}$ in (\ref{MLE}) has a close relationship with the kernel ridge regression discussed in Section \ref{sec ir}. To see this, define
\begin{eqnarray*}
	\zeta^\theta(x)=\xi(x)-y^s(x;\theta), 
	\zeta^\theta_i=y_i-y^s(x;\theta).
\end{eqnarray*}
Let
\begin{equation*}
\hat{\zeta}^\theta=\operatorname*{argmin}_g \frac{1}{n}\sum_{i=1}^n (\zeta^\theta_i-g(x_i))^2+\lambda\|g\|^2_{\mathcal{N}_\Phi(\Omega)},
\end{equation*}
which is the kernel ridge regression estimator for $\zeta^\theta$.
The results are given in Theorem \ref{th:equivalence}.

\begin{theorem}\label{th:equivalence}
	The MLE estimator $\hat{\theta}$ can be represented by
	\begin{equation}\label{theta}
	\hat{\theta}=\operatorname*{argmin}_{\theta\in\Theta}\frac{1}{n}\sum_{i=1}^n (\zeta^\theta_i-\hat{\zeta}^\theta(x_i))^2+\lambda\|\hat{\zeta}^\theta\|^2_{\mathcal{N}_\Phi(\Omega)}.
	\end{equation}
\end{theorem}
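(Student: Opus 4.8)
The plan is to show that, for each fixed $\theta$, the value of the inner kernel ridge regression objective at its minimizer $\hat\zeta^\theta$ equals a constant multiple (independent of $\theta$) of the quadratic form appearing in (\ref{MLE}); the two minimization problems then share the same minimizer. Write $Z=Z^\theta:=(\zeta_1^\theta,\dots,\zeta_n^\theta)^T=Y-y^s(X;\theta)$, so that the problem defining $\hat\zeta^\theta$ is precisely the kernel ridge regression problem (\ref{eq2.3}) with response vector $Z$. By the representer theorem together with (\ref{representer})--(\ref{ridge}), $\hat\zeta^\theta(x)=\sum_{i=1}^n c_i\Phi(x-x_i)$ with $c=(\mathbf{\Phi}+n\lambda I)^{-1}Z$, and the vector of fitted values is $(\hat\zeta^\theta(x_1),\dots,\hat\zeta^\theta(x_n))^T=\mathbf{\Phi}c=\mathbf{\Phi}(\mathbf{\Phi}+n\lambda I)^{-1}Z$.

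First I would compute the residual vector
\begin{equation*}
Z-\mathbf{\Phi}(\mathbf{\Phi}+n\lambda I)^{-1}Z=\bigl(I-\mathbf{\Phi}(\mathbf{\Phi}+n\lambda I)^{-1}\bigr)Z=n\lambda(\mathbf{\Phi}+n\lambda I)^{-1}Z,
\end{equation*}
so that the empirical loss term equals $n\lambda^2\,Z^T(\mathbf{\Phi}+n\lambda I)^{-2}Z$. Next, by the reproducing formula (\ref{eq1.2}), the penalty term equals $\|\hat\zeta^\theta\|^2_{\mathcal{N}_\Phi(\Omega)}=c^T\mathbf{\Phi}c=Z^T(\mathbf{\Phi}+n\lambda I)^{-1}\mathbf{\Phi}(\mathbf{\Phi}+n\lambda I)^{-1}Z$. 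Adding the loss and $\lambda$ times the penalty and factoring $\lambda(\mathbf{\Phi}+n\lambda I)^{-1}$ on the left and on the right gives
\begin{align*}
\frac{1}{n}\sum_{i=1}^n(\zeta_i^\theta-\hat\zeta^\theta(x_i))^2+\lambda\|\hat\zeta^\theta\|^2_{\mathcal{N}_\Phi(\Omega)}
&=\lambda\,Z^T(\mathbf{\Phi}+n\lambda I)^{-1}\bigl(n\lambda I+\mathbf{\Phi}\bigr)(\mathbf{\Phi}+n\lambda I)^{-1}Z\\
&=\lambda\,Z^T(\mathbf{\Phi}+n\lambda I)^{-1}Z\\
&=\lambda\,(Y-y^s(X;\theta))^T(\mathbf{\Phi}+n\lambda I)^{-1}(Y-y^s(X;\theta)).
\end{align*}
Since $\lambda>0$ is a fixed constant, minimizing the left-hand side over $\theta\in\Theta$ is equivalent to minimizing $(Y-y^s(X;\theta))^T(\mathbf{\Phi}+n\lambda I)^{-1}(Y-y^s(X;\theta))$, which by (\ref{MLE}) is achieved at $\hat\theta$; this establishes (\ref{theta}).

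There is no genuine obstacle here; the argument is a short linear-algebra computation. The only points that need care are (i) recognizing the inner problem as an instance of (\ref{eq2.3}) so that the closed form (\ref{representer})--(\ref{ridge}) and the reproducing identity (\ref{eq1.2}) apply (and, implicitly, that the minimizer $\hat\zeta^\theta$ is unique because $\mathbf{\Phi}+n\lambda I$ is positive definite), and (ii) the algebraic cancellation in which the factor $\mathbf{\Phi}+n\lambda I$ reappears in the middle after pulling out $\lambda(\mathbf{\Phi}+n\lambda I)^{-1}$ from each side — this is exactly what collapses the sum of the empirical loss and the RKHS penalty into the single quadratic form $\lambda\,Z^T(\mathbf{\Phi}+n\lambda I)^{-1}Z$.
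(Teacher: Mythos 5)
Your proposal is correct and follows essentially the same route as the paper's proof: express $\hat\zeta^\theta$ via the representer theorem and (\ref{ridge}), compute the residual vector $n\lambda(\mathbf{\Phi}+n\lambda I)^{-1}Y_\theta$ and the penalty $Y_\theta^T(\mathbf{\Phi}+n\lambda I)^{-1}\mathbf{\Phi}(\mathbf{\Phi}+n\lambda I)^{-1}Y_\theta$, and collapse their sum to $\lambda\,Y_\theta^T(\mathbf{\Phi}+n\lambda I)^{-1}Y_\theta$, matching (\ref{MLE}) up to the positive factor $\lambda$. No gaps; the computation and conclusion coincide with the paper's argument.
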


To employ the theory developed in Section \ref{sec ir}, we assume that $\zeta^\theta$ lies in $\mathcal{N}_\Phi(\Omega)$, or a subspace of it. This assumption does not hold under a usual Gaussian process model, because the set $\mathcal{N}_\Phi(\Omega)$ has probability zero under the probability measure of the corresponding Gaussian process \cite{driscoll1973reproducing}. Our discussion, however, should not be affected because we are \textit{not} adopting a Gaussian process model. Also, we believe that $\zeta^\theta\in\mathcal{N}_\Phi(\Omega)$ is a reasonable assumption in the context of computer experiments, because the reproducing kernel Hilbert space is large enough, which covers all smooth functions.

For notational consistency with Section \ref{sec ir}, we write $\hat{\theta}$ as $\hat{\theta}_n$ to emphasis its dependency on $n$. Similarly, we write $\lambda$ as $\lambda_n$. Then (\ref{theta}) becomes
%(\textbf{Rewrite}) Theorem \ref{th:equivalence} inspires us to consider the estimation problem
\begin{eqnarray*}
	\hat{\theta}_n=\operatorname*{argmin}_{\theta\in\Theta}\frac{1}{n}\sum_{i=1}^n (\zeta^\theta_i-\hat{\zeta}_n^\theta(x_i))^2+\lambda_n\|\hat{\zeta}_n^\theta\|^2_{\mathcal{N}_\Phi(\Omega)},
\end{eqnarray*}
with
\begin{equation*}
\hat{\zeta}^\theta_n=\operatorname*{argmin}_g \frac{1}{n}\sum_{i=1}^n (\zeta^\theta_i-g(x_i))^2+\lambda_n\|g\|^2_{\mathcal{N}_\Phi(\Omega)}.
\end{equation*}
Following the standard framework for establishing asymptotic theory for M-estimation, we should consider the limiting behavior of the objective function 
\begin{equation}\label{objective}
\frac{1}{n}\sum_{i=1}^n (\zeta^\theta_i-\hat{\zeta}_n^\theta(x_i))^2+\lambda_n\|\hat{\zeta}_n^\theta\|^2_{\mathcal{N}_\Phi(\Omega)}.
\end{equation}
Although this function is related to the kernel ridge regression, the standard rates of convergence for kernel ridge regression given by Corollary \ref{Coro:rate} are insufficient to provide an asymptotic result for $\hat{\theta}_n$. To see this, we note that according to Corollary \ref{Coro:rate}, the second term in (\ref{objective}) is merely known to be $O_p(\lambda_n)$. This error bound is too crude to ensure a convergence result for $\hat{\theta}_n$.

In contrast, if the conditions of Proposition \ref{th:improvedrate} are fulfilled, the improved rate of convergence gives the asymptotic representation
\begin{equation*}
\|\hat{\zeta}_n^\theta\|^2_{\mathcal{N}_\Phi(\Omega)}=\|\zeta^\theta\|^2_{\mathcal{N}_\Phi(\Omega)}+O_p(\lambda_n),
\end{equation*}
which gives a much finer error bound.
Thanks to the improved rates of convergence, we can establish an asymptotic theory for $\hat{\theta}_n$.

We first consider the prediction problem: how accurate $\hat{\zeta}^\theta_n$ can approximate $\zeta^\theta$ in a uniform sense. The result, which is a generalization of Proposition \ref{th:improvedrate}, is given by Theorem \ref{th:uniform}. As in Section \ref{sec ir}, we assume that the reproducing kernel Hilbert space $\mathcal{N}_\Phi(\Omega)$ is equal to some (fractional) Sobolev space $H^m(\Omega)$ with equivalent norms, for some $m>d/2$. Specifically, if $\Phi$ is a Mat\'ern kernel in (\ref{matern}), then $m=\nu+d/2$.

In Theorem \ref{th:uniform}, we pursue non-asymptotic error bounds, that is, the sample size $n$ is assumed to be fixed rather than tending to infinity.
In the rest of this article, we use $c_1,c_2,c_3,\ldots$ to denote universal positive constants. They are independent of $n$. They may depend on $m,d,\Omega$, and the quasi-uniformity constant $B$ in (\ref{quasiuniform}), but are independent of the specific collocation scheme of the design points. For simplicity, we may use the same $c_i$ in different places to denote different constants.

\begin{theorem}\label{th:uniform}
	As in Proposition \ref{th:improvedrate}, we suppose the set of design points is quasi-uniform, i.e., (\ref{quasiuniform}) holds.
	Suppose for each $\theta\in\Theta$, $\zeta^\theta\in \mathcal{N}_{\Phi}(\Omega)$, and there exists $v_{\theta} \in L_2(\Omega)$, such that%
	\begin{eqnarray}\label{condition1}
	\begin{split}
	\zeta^\theta(x) =\int_\Omega \Phi(x-t)v_{\theta} (t)dt,\\
	\bar{v}:=\sup_{\theta\in\Theta} \| v_{\theta} \|_{L_2(\Omega)}<+\infty.
	\end{split}
	\end{eqnarray}
	Then for $n>c_1$, the following two inequalities
	\begin{eqnarray*}
		\sup_{\theta\in\Theta}\|\hat{\zeta}_n^\theta-{\zeta}^\theta\|_n&\leq& c_2 \bar{v} \lambda_n \vee c_3 t n^{-\frac{1}{2}}\lambda_n^{-\frac{d}{4m}},\\
		\sup_{\theta\in\Theta}\|\hat{\zeta}_n^\theta-{\zeta}^\theta\|_{\mathcal{N}_\Phi(\Omega)}&\leq& c_4 \bar{v} \lambda_n^{\frac{1}{2}} \vee c_5 t n^{-\frac{1}{2}}\lambda_n^{-\frac{2m+d}{4m}},
	\end{eqnarray*}
	hold simultaneously on the event
	\begin{equation}\label{eventA}
	A_t:=	\left\{\sup_{g \in \mathcal{N}_{\Phi}(\Omega)}\frac{|\langle e,g\rangle_n|}{\| g \|_n^{1-\frac{d}{2m}} \| g \|_{\mathcal{N}_{\Phi}(\Omega)}^{\frac{d}{2m}} }\leq t n^{-1/2}\right\}.
	\end{equation}
\end{theorem}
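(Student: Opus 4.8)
The plan is to run the argument behind the sketch of Proposition \ref{th:existing}, but starting from the \emph{improved} basic inequality \eqref{improvedbasic} rather than \eqref{re1}, and to make every constant explicit and uniform in $\theta$. First I would fix $\theta\in\Theta$ and abbreviate $\zeta=\zeta^\theta$, $\hat\zeta=\hat\zeta_n^\theta$, $v=v_\theta$. Exactly as in the derivation of \eqref{improvedbasic}, the optimality of $\hat\zeta$ gives the basic inequality, then \eqref{eq2.20} converts it to \eqref{variation}, and then \eqref{normequality} (which applies because of the representation \eqref{condition1}) together with Cauchy--Schwarz yields
\begin{eqnarray*}
\|\hat\zeta-\zeta\|_n^2+\lambda_n\|\hat\zeta-\zeta\|_{\mathcal{N}_\Phi(\Omega)}^2
\leq 2\langle e,\hat\zeta-\zeta\rangle_n+2\lambda_n\|v\|_{L_2(\Omega)}\,\|\hat\zeta-\zeta\|_{L_2(\Omega)}.
\end{eqnarray*}
The point of stating the conclusion on the event $A_t$ is that on $A_t$ the empirical-process term is controlled deterministically: $|\langle e,\hat\zeta-\zeta\rangle_n|\leq tn^{-1/2}\|\hat\zeta-\zeta\|_n^{1-\frac{d}{2m}}\|\hat\zeta-\zeta\|_{\mathcal{N}_\Phi(\Omega)}^{\frac{d}{2m}}$. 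So the whole argument becomes a purely algebraic manipulation of this one inequality, with no probability left; the $O_p$ statements of Proposition \ref{th:improvedrate} will follow afterwards once one knows $\mathbb{P}(A_t)\to 1$ as $t\to\infty$ (this is the empirical-process lemma, Lemma \ref{lemma:empirical}, quoted in the excerpt), but Theorem \ref{th:uniform} itself is the deterministic core.

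The remaining work is the ``elementary algebraic calculations'' alluded to after \eqref{emp1}. I would bound $\|\hat\zeta-\zeta\|_{L_2(\Omega)}$ on the right-hand side using Proposition \ref{Prop:utreras} and \eqref{hbound}: since the design is quasi-uniform, $\|g\|_{L_2(\Omega)}^2\leq c(\|g\|_n^2+h_n^{2m}\|g\|_{H^m(\Omega)}^2)$ and $h_n\leq B'n^{-1/d}$, so $h_n^{2m}\lesssim n^{-2m/d}$, which is negligible against the scales we are tracking; hence $\|\hat\zeta-\zeta\|_{L_2(\Omega)}\lesssim \|\hat\zeta-\zeta\|_n + n^{-m/d}\|\hat\zeta-\zeta\|_{\mathcal{N}_\Phi(\Omega)}$, using norm equivalence of $\mathcal{N}_\Phi(\Omega)$ and $H^m(\Omega)$. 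Plugging this in and writing $a=\|\hat\zeta-\zeta\|_n$, $b=\|\hat\zeta-\zeta\|_{\mathcal{N}_\Phi(\Omega)}$, the basic inequality takes the schematic form
\begin{eqnarray*}
a^2+\lambda_n b^2 \;\lesssim\; tn^{-1/2}\,a^{1-\frac{d}{2m}}b^{\frac{d}{2m}}+\lambda_n\bar v\,a+\lambda_n\bar v\,n^{-m/d}b.
\end{eqnarray*}
Now I would apply Young's inequality to each product on the right so as to absorb a small multiple of $a^2$ and of $\lambda_n b^2$ into the left-hand side: for the first term, split the exponents $1-\frac{d}{2m}$ on $a$ and $\frac{d}{2m}$ on $b$ with weights chosen to produce $a^2$ and $(\lambda_n b^2)^{\frac{d}{4m}}\cdot\lambda_n^{-\frac{d}{4m}}$, which is the standard manoeuvre (cf.\ Lemma \ref{lemma:order}); for the second term use $\lambda_n\bar v a\leq \frac14 a^2 + c\lambda_n^2\bar v^2$; for the third use $\lambda_n\bar v n^{-m/d}b\leq \frac14\lambda_n b^2 + c\lambda_n\bar v^2 n^{-2m/d}$, the last term being lower order. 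Collecting the leftovers gives $a^2+\lambda_n b^2 \lesssim \lambda_n^2\bar v^2 + t^2 n^{-1}\lambda_n^{-\frac{d}{2m}}$, whence $a\lesssim \bar v\lambda_n \vee tn^{-1/2}\lambda_n^{-\frac{d}{4m}}$ and $b\lesssim \bar v\lambda_n^{1/2}\vee tn^{-1/2}\lambda_n^{-\frac{2m+d}{4m}}$, which are exactly the asserted bounds. The condition $n>c_1$ is what guarantees $h_n\leq h_0$ so that Proposition \ref{Prop:utreras} applies, and that the $n^{-2m/d}$ remainders are genuinely dominated.

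The only genuinely non-routine point is \emph{uniformity in $\theta$}. Each step above produces constants depending only on $m,d,\Omega,B$ and on $\|v_\theta\|_{L_2(\Omega)}$; replacing the latter by $\bar v=\sup_\theta\|v_\theta\|_{L_2(\Omega)}<\infty$ makes every bound independent of $\theta$, so taking the supremum over $\theta\in\Theta$ is harmless \emph{provided} we never use a $\theta$-dependent quantity other than $\|v_\theta\|_{L_2(\Omega)}$ --- in particular the event $A_t$ in \eqref{eventA} is a supremum over \emph{all} $g\in\mathcal{N}_\Phi(\Omega)$, so a single realization of $A_t$ simultaneously controls $\langle e,\hat\zeta_n^\theta-\zeta^\theta\rangle_n$ for every $\theta$ at once. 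I expect the main obstacle to be purely bookkeeping: tracking the chain of Young-inequality splittings so that the exponents of $\lambda_n$ come out as $-\frac{d}{4m}$ and $-\frac{2m+d}{4m}$ and verifying that the $h_n^{2m}$-type remainders really are swallowed for $n>c_1$, rather than any conceptual difficulty. (Since the excerpt states Proposition \ref{th:improvedrate} follows as the special case $\zeta^\theta=f$, $v_\theta=v$, $\Theta=\{\theta\}$, the deterministic inequality here is exactly what is needed there after invoking Lemma \ref{lemma:empirical} to get $\mathbb{P}(A_t)\to1$.)
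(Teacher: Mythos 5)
Your proposal is correct and mirrors the paper's own proof: the same improved basic inequality derived uniformly in $\theta$, the same use of Proposition \ref{Prop:utreras} with quasi-uniformity (via (\ref{hbound})) to replace $\|\zeta^\theta-\hat\zeta_n^\theta\|_{L_2(\Omega)}$ by $\|\cdot\|_n + n^{-m/d}\|\cdot\|_{\mathcal{N}_\Phi(\Omega)}$, and the same uniformity mechanism through $\bar v$ and the fact that the event $A_t$ controls $\langle e,g\rangle_n$ for all $g$ simultaneously. The only cosmetic difference is in the final algebra: you absorb the cross terms by Young's inequality (leaving the same lower-order $\lambda_n\bar v^2 n^{-2m/d}$ remainder), whereas the paper splits into three cases and invokes Lemma \ref{lemma:order}; the two manipulations are equivalent.
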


Condition (\ref{condition1}) is a uniform version of the condition (\ref{T}), because in (\ref{condition1}) we require not only the existence of $v_\theta\in L_2(\Omega)$, but also the uniform boundedness of their $L_2$ norms. Suppose a Mat\'ern kernel in (\ref{matern}) with $m=\nu+d/2$ is used. Theorem \ref{th:existence} shows that (\ref{T}) is equivalent to $\|f_e\|_{H^{2m}(\mathbb{R}^d)}<\infty$. From the proof of Theorem \ref{th:existence}, one can justify that (\ref{condition1}) is equivalent to $\sup_{\theta\in\Theta}\|\zeta^\theta_e\|_{H^{2m}(\mathbb{R}^d)}<\infty$.
From Theorem \ref{th:uniform}, we can establish
the asymptotic rates of convergence as given in Corollary \ref{coro:uniform}.

\begin{corollary}\label{coro:uniform}
	Suppose $e_i$'s are sub-Gaussian. Then under the conditions of Theorem \ref{th:uniform}, we have the rates of convergence
	\begin{eqnarray*}
		\sup_{\theta\in\Theta}\|\hat{\zeta}_n^\theta-{\zeta}^\theta\|_n&=& O_p\left(\lambda_n \vee  n^{-\frac{1}{2}}\lambda_n^{-\frac{d}{4m}}\right),\\
		\sup_{\theta\in\Theta}\|\hat{\zeta}_n^\theta-{\zeta}^\theta\|_{\mathcal{N}_\Phi(\Omega)}&=& O_p\left( \lambda_n^{\frac{1}{2}}\vee  n^{-\frac{1}{2}}\lambda_n^{-\frac{2m+d}{4m}}\right).
	\end{eqnarray*}
\end{corollary}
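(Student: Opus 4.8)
The plan is to derive the stochastic rates by combining the non-asymptotic event-based bounds of Theorem~\ref{th:uniform} with a probabilistic estimate showing that the event $A_t$ in (\ref{eventA}) has probability close to one once $t$ is taken to be a slowly growing (or even fixed large) multiple of a constant. The two displayed bounds in Corollary~\ref{coro:uniform} are exactly the bounds in Theorem~\ref{th:uniform} with the factor $\bar v$ absorbed into the $O_p$ symbol (it is a fixed finite constant by (\ref{condition1})) and with the factor $t$ replaced by a bounded-in-probability quantity. So the whole corollary reduces to the claim: under the sub-Gaussian assumption (\ref{eq2.2}) on the errors $e_i$, the supremum functional
\[
Z_n:=\sup_{g\in\mathcal{N}_\Phi(\Omega)}\frac{|\langle e,g\rangle_n|}{\|g\|_n^{1-\frac{d}{2m}}\,\|g\|_{\mathcal{N}_\Phi(\Omega)}^{\frac{d}{2m}}}
\]
satisfies $n^{1/2}Z_n=O_p(1)$, i.e.\ $\mathbb{P}(A_t)\to 1$ as $t\to\infty$ uniformly in $n$.

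First I would invoke the empirical-process bound already announced in the excerpt: the quantity $n^{1/2}Z_n$ is $O_p(1)$ by a chaining argument over the unit ball of $\mathcal{N}_\Phi(\Omega)=H^m(\Omega)$, using the known metric-entropy estimate $\log N(\varepsilon, \{g:\|g\|_{H^m}\le 1\}, \|\cdot\|_n)\lesssim \varepsilon^{-d/m}$ together with a sub-Gaussian maximal/deviation inequality for the weighted sums $\langle e,g\rangle_n$. This is precisely the content of Lemma~\ref{lemma:empirical} referenced in the sketch of proof of Proposition~\ref{th:existing}; here one only needs its conclusion, namely that for every $\eta>0$ there is a $t=t(\eta)$ with $\mathbb{P}(Z_n>tn^{-1/2})\le\eta$ for all sufficiently large $n$. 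Equivalently, $\mathbb{P}(A_{t(\eta)})\ge 1-\eta$ for all $n>c_1$.

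Next I would condition on $A_t$. On that event Theorem~\ref{th:uniform} gives, for all $n>c_1$,
\[
\sup_{\theta\in\Theta}\|\hat\zeta_n^\theta-\zeta^\theta\|_n\le c_2\bar v\,\lambda_n \vee c_3 t\,n^{-\frac12}\lambda_n^{-\frac{d}{4m}},
\qquad
\sup_{\theta\in\Theta}\|\hat\zeta_n^\theta-\zeta^\theta\|_{\mathcal{N}_\Phi(\Omega)}\le c_4\bar v\,\lambda_n^{\frac12}\vee c_5 t\,n^{-\frac12}\lambda_n^{-\frac{2m+d}{4m}}.
\]
Since $\bar v<\infty$ and the $c_i$ are universal, the right-hand sides are bounded by a constant (depending on $t$ but not on $n$) times $\lambda_n\vee n^{-1/2}\lambda_n^{-d/4m}$ and $\lambda_n^{1/2}\vee n^{-1/2}\lambda_n^{-(2m+d)/4m}$ respectively. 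Given $\varepsilon>0$, choose $\eta=\varepsilon$ and the corresponding $t=t(\varepsilon)$; then with probability at least $1-\varepsilon$ (for all large $n$) the two suprema are at most a fixed constant multiple of the stated rates. This is exactly the definition of the $O_p$ statements in Corollary~\ref{coro:uniform}.

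The main obstacle is the empirical-process step, i.e.\ establishing $n^{1/2}Z_n=O_p(1)$ uniformly in $n$: one must control a supremum of a sub-Gaussian-weighted empirical functional over the infinite-dimensional class $\{g:\|g\|_{H^m}\le1\}$, where the normalization mixes the random norm $\|g\|_n$ and the RKHS norm, so a direct peeling/chaining argument in the spirit of van de Geer is required, together with the dimension-free entropy bound for $H^m(\Omega)$. Everything after that — plugging into Theorem~\ref{th:uniform}, absorbing $\bar v$ and $t$ into constants, and translating the high-probability bound into an $O_p$ statement — is routine bookkeeping. Since this empirical-process bound is already stated as Lemma~\ref{lemma:empirical} in the appendix, the proof of the corollary itself is short: it is the conjunction of Theorem~\ref{th:uniform} and that lemma.
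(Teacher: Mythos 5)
Your argument is exactly the paper's: invoke Lemma \ref{lemma:empirical} to get $\mathbb{P}(A_t)\geq 1-c_1\exp\{-c_2t^2\}$ (equivalently $n^{1/2}Z_n=O_p(1)$), then apply the deterministic bounds of Theorem \ref{th:uniform} on $A_t$ and absorb $\bar{v}$ and $t$ into the $O_p$ constants. The proposal is correct and matches the paper's proof.
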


\begin{proof}
	According to Lemma \ref{lemma:empirical},
	$A_t$ has probability at least $1-c_1\exp\{-c_2t^2\}$ for all $t>c_3$, which tends to one as $t\rightarrow +\infty$. The rates then follow from Theorem \ref{th:uniform}.
\end{proof}

Next we state the convergence results for $\hat{\theta}_n$. We will show that under certain conditions, $\hat{\theta}_n$ will tend to
\begin{eqnarray}\label{thetaprime}
\theta'=\operatorname*{argmin}_{\theta\in\Theta}\|\zeta^\theta\|_{\mathcal{N}_\Phi(\Omega)},
\end{eqnarray}
as $n\rightarrow\infty$. Here we only present the error bound of $\|\hat{\theta}_n-\theta'\|$ for the case $\lambda_n^{-1}=O_p(n^{\frac{2m}{4m+d}})$, because this case gives the best rate of convergence. By using similar but more cumbersome mathematical analysis, we can show that $\hat{\theta}_n$ converges to $\theta'$ if $\lambda_n^{-1}=o_p(n^{\frac{2m}{2m+d}})$. The general error bounds are more complicated and we choose not to pursue them here.

\begin{theorem}\label{th:calibration}
	Suppose the conditions of Theorem \ref{th:uniform} are fulfilled. In addition, we suppose that $\theta'$ is the unique solution to (\ref{thetaprime}). Moreover, there exists constants $a_2,a_3,\gamma>0$ such that
	\begin{eqnarray}\label{curvature}
	\|\zeta^\theta\|^2_{\mathcal{N}_\Phi(\Omega)}-\|\zeta^{\theta'}\|^2_{\mathcal{N}_\Phi(\Omega)}\geq a_2\min\{\|\theta-\theta'\|^\gamma,a_3\},
	\end{eqnarray}
	for all $\theta\in\Theta$, where $\|\cdot\|$ denotes the Euclidean distance. Let $A_t$ be the event defined in (\ref{eventA}), and 
	\begin{eqnarray}\label{lambdabound}
	\lambda_n^{\frac{4m+d}{4m}}>a_1 \bar{v}^{-1}tn^{-1/2},
	\end{eqnarray}
	for some $a_1>0$.
	If $\bar{v}^2\lambda_n<c_1$, then on the event $A_t$,
	\begin{eqnarray*}
		\|\hat{\theta}_n-\theta'\|\leq c_3 \bar{v}^{2/\gamma} \lambda_n^{1/\gamma}.
	\end{eqnarray*}
\end{theorem}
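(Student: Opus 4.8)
The plan is to run a standard M-estimation argument, using the characterization of $\hat\theta_n$ from Theorem \ref{th:equivalence} together with the improved (non-asymptotic) bounds of Theorem \ref{th:uniform}. The objective function in (\ref{theta}) is, by the improved basic inequality machinery, close to $\|\zeta^\theta\|^2_{\mathcal{N}_\Phi(\Omega)}$, whose minimizer is $\theta'$; the curvature condition (\ref{curvature}) will then translate the closeness of the objective values into closeness of the arguments.

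First I would work on the event $A_t$, so that all the bounds in Theorem \ref{th:uniform} hold. The key quantity is $M_n(\theta):=\frac{1}{n}\sum_{i=1}^n(\zeta^\theta_i-\hat\zeta_n^\theta(x_i))^2+\lambda_n\|\hat\zeta_n^\theta\|^2_{\mathcal{N}_\Phi(\Omega)}$. Writing $\zeta^\theta_i=\zeta^\theta(x_i)+e_i$ and expanding, I would decompose
\begin{equation*}
M_n(\theta)=\|e\|_n^2+\|\hat\zeta_n^\theta-\zeta^\theta\|_n^2-2\langle e,\hat\zeta_n^\theta-\zeta^\theta\rangle_n+\lambda_n\|\hat\zeta_n^\theta\|^2_{\mathcal{N}_\Phi(\Omega)}.
\end{equation*}
The term $\|e\|_n^2$ does not depend on $\theta$, so it is irrelevant for the argmin. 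Using Theorem \ref{th:uniform} on $A_t$, the term $\|\hat\zeta_n^\theta-\zeta^\theta\|_n^2$ is $O(\bar v^2\lambda_n^2\vee t^2 n^{-1}\lambda_n^{-d/2m})$ uniformly in $\theta$; the cross term is controlled via the defining inequality of $A_t$ with $g=\hat\zeta_n^\theta-\zeta^\theta$, giving $|\langle e,\hat\zeta_n^\theta-\zeta^\theta\rangle_n|\le t n^{-1/2}\|\hat\zeta_n^\theta-\zeta^\theta\|_n^{1-d/2m}\|\hat\zeta_n^\theta-\zeta^\theta\|_{\mathcal{N}_\Phi}^{d/2m}$, which again is of the same order after substituting the two bounds of Theorem \ref{th:uniform}; and for the RKHS term I would write $\|\hat\zeta_n^\theta\|^2_{\mathcal{N}_\Phi}=\|\zeta^\theta\|^2_{\mathcal{N}_\Phi}+2\langle\zeta^\theta,\hat\zeta_n^\theta-\zeta^\theta\rangle_{\mathcal{N}_\Phi}+\|\hat\zeta_n^\theta-\zeta^\theta\|^2_{\mathcal{N}_\Phi}$ and use the identity (\ref{normequality}) (i.e. $\langle\zeta^\theta,\cdot\rangle_{\mathcal{N}_\Phi}=\langle v_\theta,\cdot\rangle_{L_2}$) plus Cauchy--Schwarz and Proposition \ref{Prop:utreras} to bound the last two terms. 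Collecting everything and imposing the hypothesis (\ref{lambdabound}) — which guarantees $\lambda_n$ dominates the variance scale $tn^{-1/2}\lambda_n^{-d/4m}/\bar v$, so that the $\lambda_n$-type contributions win — I expect to obtain, uniformly in $\theta$,
\begin{equation*}
M_n(\theta)=\|e\|_n^2+\lambda_n\|\zeta^\theta\|^2_{\mathcal{N}_\Phi(\Omega)}+R_n(\theta),\qquad \sup_{\theta\in\Theta}|R_n(\theta)|\le c\,\bar v^2\lambda_n^2,
\end{equation*}
for $\bar v^2\lambda_n$ small enough.

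Second, I would exploit this uniform approximation at $\hat\theta_n$ and at $\theta'$. Since $\hat\theta_n$ minimizes $M_n$, $M_n(\hat\theta_n)\le M_n(\theta')$, so
\begin{equation*}
\lambda_n\|\zeta^{\hat\theta_n}\|^2_{\mathcal{N}_\Phi}+R_n(\hat\theta_n)\le\lambda_n\|\zeta^{\theta'}\|^2_{\mathcal{N}_\Phi}+R_n(\theta'),
\end{equation*}
hence $\|\zeta^{\hat\theta_n}\|^2_{\mathcal{N}_\Phi}-\|\zeta^{\theta'}\|^2_{\mathcal{N}_\Phi}\le \lambda_n^{-1}(|R_n(\hat\theta_n)|+|R_n(\theta')|)\le 2c\,\bar v^2\lambda_n$. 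Now invoke the curvature condition (\ref{curvature}): $a_2\min\{\|\hat\theta_n-\theta'\|^\gamma,a_3\}\le 2c\,\bar v^2\lambda_n$. Because $\bar v^2\lambda_n<c_1$ can be taken small enough that the right-hand side is below $a_2 a_3$, the minimum is achieved by the first branch, so $\|\hat\theta_n-\theta'\|^\gamma\le (2c/a_2)\bar v^2\lambda_n$, i.e. $\|\hat\theta_n-\theta'\|\le c_3\bar v^{2/\gamma}\lambda_n^{1/\gamma}$, which is the claim.

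The main obstacle, I expect, is the bookkeeping in the first step: making the remainder $R_n(\theta)$ genuinely uniform in $\theta$ and of the stated order $\bar v^2\lambda_n^2$ rather than something larger. This requires that each of the three error contributions (the $\|\cdot\|_n^2$ term, the cross term with $e$, and the RKHS cross/quadratic terms) be handled with the \emph{uniform} bounds of Theorem \ref{th:uniform} and that the variance-scale terms $tn^{-1/2}\lambda_n^{-d/4m}$ be absorbed into the $\lambda_n$-scale using (\ref{lambdabound}); the delicate point is verifying that the cross term $\langle v_\theta,\hat\zeta_n^\theta-\zeta^\theta\rangle_{L_2}$, after passing from the empirical semi-norm to $L_2$ via Proposition \ref{Prop:utreras}, still contributes only at order $\bar v^2\lambda_n^2$ and not at the borderline order $\bar v^2\lambda_n$ (which would be fatal, since it is the same order as the curvature gap we are trying to detect). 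A careful accounting — perhaps splitting into the regimes where the bias or the variance term dominates in Theorem \ref{th:uniform} — should close this gap, but it is where the real work lies.
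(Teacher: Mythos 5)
Your proposal is correct and follows essentially the paper's own argument: under (\ref{lambdabound}) the uniform bounds of Theorem \ref{th:uniform} collapse to $\sup_\theta\|\hat\zeta_n^\theta-\zeta^\theta\|_n\lesssim\bar v\lambda_n$ and $\sup_\theta\|\hat\zeta_n^\theta-\zeta^\theta\|_{\mathcal{N}_\Phi(\Omega)}\lesssim\bar v\lambda_n^{1/2}$, which are plugged into the basic inequality $M_n(\hat\theta_n)\le M_n(\theta')$ with the three error contributions bounded exactly as you describe (the paper labels them $D_1,D_2,D_3$), followed by the identical curvature step. The ``delicate point'' you flag is in fact harmless: the cross term $\langle v_\theta,\hat\zeta_n^\theta-\zeta^\theta\rangle_{L_2(\Omega)}$ is only of order $\bar v^2\lambda_n$ (via Proposition \ref{Prop:utreras} and the two uniform bounds), but it enters your remainder $R_n(\theta)$ multiplied by $\lambda_n$, so it contributes at order $\bar v^2\lambda_n^2$, precisely matching the paper's bound $\lambda_n D_3\lesssim\bar v^2\lambda_n^2$.
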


\begin{remark}
	Suppose $h(\theta):=\|\zeta^\theta\|^2_{\mathcal{N}_\Phi(\Omega)}-\|\zeta^{\theta'}\|^2_{\mathcal{N}_\Phi(\Omega)}$ is continuously twice differentiable around $\theta'$. Then we can apply Taylor's theorem to conclude that (\ref{curvature}) holds with $\gamma=2$.
\end{remark}

\begin{corollary}
	Under the conditions of Theorem \ref{th:calibration} and $\lambda_n^{-1}=O(n^{\frac{2m}{4m+d}})$, we have the rate of convergence $\|\hat{\theta}_n-\theta'\|=O_p(\lambda_n^{1/\gamma})$. Specifically, if $h(\theta):=\|\zeta^\theta\|^2_{\mathcal{N}_\Phi(\Omega)}-\|\zeta^{\theta'}\|^2_{\mathcal{N}_\Phi(\Omega)}$ is continuously twice differentiable around $\theta'$, then $\|\hat{\theta}_n-\theta'\|=O_p(\lambda_n^{1/2})$.
\end{corollary}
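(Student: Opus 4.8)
The plan is to obtain both assertions as immediate consequences of Theorem~\ref{th:calibration}, by turning the deterministic bound that holds on the event $A_t$ into an $O_p$ statement --- exactly the passage already carried out when Corollary~\ref{coro:uniform} is deduced from Theorem~\ref{th:uniform}. Theorem~\ref{th:calibration} already hands us, on $A_t$, the inequality $\|\hat\theta_n-\theta'\|\le c_3\bar v^{2/\gamma}\lambda_n^{1/\gamma}$, valid whenever $\bar v^2\lambda_n<c_1$ and the coupling condition \eqref{lambdabound} holds. Since $\bar v$, $c_1$, $c_3$ and $\gamma$ are fixed and $\lambda_n\to 0$ in the regime of interest, the first requirement is met for all large $n$, so the entire task reduces to selecting the free parameter $t$ so that $\mathbb P(A_t)$ is as close to $1$ as desired while \eqref{lambdabound} is still respected.

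First I would use the hypothesis $\lambda_n^{-1}=O(n^{2m/(4m+d)})$, that is, $\lambda_n\ge c_0\,n^{-2m/(4m+d)}$ for large $n$, to get $\lambda_n^{(4m+d)/(4m)}n^{1/2}\ge c_0^{(4m+d)/(4m)}>0$; hence \eqref{lambdabound}, which reads $\lambda_n^{(4m+d)/(4m)}>a_1\bar v^{-1}tn^{-1/2}$, leaves room to take $t=t_n$ growing to infinity (when $\lambda_n$ decays strictly more slowly than the critical order this left-hand side diverges, and in the critical case $t_n$ may be held at a large fixed value). Then, as in the proof of Corollary~\ref{coro:uniform}, Lemma~\ref{lemma:empirical} gives $\mathbb P(A_{t_n})\ge 1-c_1\exp\{-c_2 t_n^2\}$, which tends to $1$; on $A_{t_n}$ the bound of Theorem~\ref{th:calibration} holds with a constant independent of $n$, so $\|\hat\theta_n-\theta'\|=O_p(\lambda_n^{1/\gamma})$.

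For the refined statement I would invoke the remark following Theorem~\ref{th:calibration}: if $h(\theta)=\|\zeta^\theta\|^2_{\mathcal N_\Phi(\Omega)}-\|\zeta^{\theta'}\|^2_{\mathcal N_\Phi(\Omega)}$ is twice continuously differentiable near $\theta'$, then --- $\theta'$ being the unique minimizer, so that $h\ge 0$, $h(\theta')=0$, $\nabla h(\theta')=0$, and (as the remark tacitly assumes) the Hessian of $h$ at $\theta'$ is nonsingular --- a second-order Taylor expansion at $\theta'$ together with a compactness argument bounding $h$ below away from $\theta'$ yields \eqref{curvature} with $\gamma=2$. Substituting $\gamma=2$ into the conclusion of the first part gives $\|\hat\theta_n-\theta'\|=O_p(\lambda_n^{1/2})$.

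The main obstacle is the bookkeeping in the second paragraph: one must pick $t$ large enough to drive $\mathbb P(A_t)$ to $1$ and yet small enough to obey the $n$-dependent constraint \eqref{lambdabound}, and it is precisely the growth restriction $\lambda_n^{-1}=O(n^{2m/(4m+d)})$ that makes these two demands compatible, by keeping $\lambda_n^{(4m+d)/(4m)}n^{1/2}$ bounded away from $0$. Everything else --- checking $\bar v^2\lambda_n<c_1$ for large $n$ and converting the high-probability deterministic inequality into the $O_p$ bound --- is routine and mirrors the proof of Corollary~\ref{coro:uniform}.
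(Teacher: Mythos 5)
Your proposal is correct and follows essentially the same route as the paper: invoke Lemma \ref{lemma:empirical} to control the probability of the event $A_t$, apply the deterministic bound of Theorem \ref{th:calibration} on that event (using $\lambda_n^{-1}=O(n^{\frac{2m}{4m+d}})$ to keep \eqref{lambdabound} compatible with a large choice of $t$), and use the Taylor-expansion remark to get $\gamma=2$ in the smooth case. Your bookkeeping of the $t$-versus-\eqref{lambdabound} trade-off is in fact spelled out more carefully than in the paper's own two-line proof.
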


\begin{proof}
	According to Lemma \ref{lemma:empirical},
	$A_t$ has probability at least $1-c_1\exp\{-c_2t^2\}$ for all $t>c_3$, which tends to one as $t\rightarrow +\infty$. The rate then follows from Theorem \ref{th:calibration}.
\end{proof}

%A special case of the frequentist estimator of the Kennedy-O'Hagan's approach is studied by \cite{tuo2016theoretical}. The Two-Wu theory identifies the limit value of $\hat{\theta}$ when the physical data $y_i$ are noiseless, i.e., $e_i$'s in (\ref{KO1}) are zero.

\begin{remark}
	\cite{tuo2016theoretical} observed that under certain conditions, the limit value of the K-O method is  $\theta'$ defined in (\ref{thetaprime}), i.e., $\theta_0=\theta'$.
	In Theorem 4.2 of \cite{tuo2016theoretical}, they prove the limit result when the physical observations $y_i$ have no random error, i.e., $e_i$'s in (\ref{KO1}) are zero. In Tuo-Wu's result, the condition (\ref{condition1}) is also necessary in the mathematical treatments. In Theorem \ref{th:calibration} of this paper, we generalize the Tuo-Wu theory by assuming that $e_i$'s are independent and identically distributed sub-Gaussian random variables, and obtain the rate of convergence. Given the fact that physical responses are always subject to random noise, Theorem \ref{th:calibration} in this paper is much more useful than Theorem 4.2 of \cite{tuo2016theoretical} for practical applications. Therefore, the result we obtain here can be viewed as a substantial improvement over the Tuo-Wu theory. 
\end{remark}

\section{A simulation study}\label{sec:simulation}

The main objective of this section is to verify the rate of convergence given by Theorem \ref{th:calibration} in a simulation study. Theorem \ref{th:calibration} asserts that under certain conditions and $\lambda_n\sim n^{-\frac{2m}{4m+d}}$, we have the rate of convergence $\|\hat{\theta}_n-\theta'\|=O_p(\lambda_n^{1/\gamma})$. Specifically, if $h(\theta):=\|\zeta^\theta\|^2_{\mathcal{N}_\Phi(\Omega)}-\|\zeta^{\theta'}\|^2_{\mathcal{N}_\Phi(\Omega)}$ is continuously twice differentiable around $\theta'$, then 
\begin{eqnarray}\label{rate}
\|\hat{\theta}_n-\theta'\|=O_p(\lambda_n^{1/2})=O_p(n^{-\frac{m}{4m+d}}).
\end{eqnarray}

Our goal is to conduct a numerical study to verify whether the rate of convergence $O_p(n^{-\frac{m}{4m+d}})$ is sharp.
To this end, we take the logarithm on both sides of (\ref{rate}) to get
\begin{eqnarray*}
	\log \|\hat{\theta}_n-\theta'\|\lesssim -\frac{m}{4m+d}\log n+ c.
\end{eqnarray*}
This inspires us to consider a set of sample sizes, denoted as $\{n_1,\ldots,n_m\}$. Then for each $n_j$, we conduct an independent simulation and computer $E_j=\|\hat{\theta}_{n_j}-\theta'\|$. Next we consider the regression problem given by
\begin{eqnarray}\label{appro}
\log E_j=a+b\log n_j+e_j, ~~ j=1,\ldots,m. 
\end{eqnarray}
We estimate the regression coefficients $(a,b)$ by the least squares method and denote the estimator as $(\hat{a},\hat{b})$.
Then we can regard $O_p(n^{\hat{b}})$ as the estimated rate of convergence. We shall check whether $\hat{b}$ is close to $-\frac{m}{4m+d}$, the theoretical rate of convergence asserted by Theorem \ref{th:calibration}.

%In this section we report simulation studies that verify that Under the conditions of Theorem 5 and $\lambda_n^{-1}=O_p(n^{\frac{2m}{4m+d}})$, we have the rate of convergence $\|\hat{\theta}_n-\theta'\|=O_p(\lambda_n^{1/\gamma})$. Specifically, if $h(\theta):=\|\zeta^\theta\|^2_{\mathcal{N}_\Phi(\Omega)}-\|\zeta^{\theta'}\|^2_{\mathcal{N}_\Phi(\Omega)}$ is continuously twice differentiable around $\theta'$, then $\|\hat{\theta}_n-\theta'\|=O_p(\lambda_n)$.
%That is $\|\hat{\theta}_n-\theta'\|=O_p(n^{-\frac{m}{4m+d}})$.
%
%Taking logarithm on both sides of the above formula yields
%\begin{eqnarray}\label{appro}
%\log \|\hat{\theta}_n-\theta'\|\approx -\frac{m}{4m+d}\log n+ c.
%\end{eqnarray}

In our simulation study, we need to find functions that satisfy the condition (\ref{eq2.14})  Suppose $\Phi(x)$ is the exponential kernel function $\Phi(x)=\exp\{-|x|\}$, which is also the Mat\'ern kernel function (\ref{matern}) with $\phi=1$ and $\nu=0.5$, and the experimental region $\Omega=[-1,1]$. The corresponding Sobolev space is $H^1[-1,1]$.

%Write $\Phi*\Phi(x)$ as $\Phi*\Phi(x)=\int_{\Omega}\Phi(x-t)\Phi(t)dt, x\in\Omega$, from \cite{haaland2011accurate}, we know that for $f\in {\mathcal{N}_{\Phi*\Phi}(\Omega)}\subset {\mathcal{N}_{\Phi }(\Omega)} $ and $x\in\Omega$, there exists $v(t)\in L_2(\Omega)$, such that $f(x) =\int_\Omega \Phi(x-t)v (t)dt$.
%Through some integral calculations, we have %$\Phi*\Phi(x)=e^{-|x|}+|x|e^{-|x|}-e^{x-2}/2-e^{-(x+2)}/2$. 

%Let $z(x)$ be a deterministic function which lies in the function space $N_{\Phi}(\Omega)$, where $\Phi$ is the Matern kernel whose smoothness parameter $\nu=0.5$. 
Suppose the true process $\xi$ is
\begin{eqnarray*}
\xi(x)=\int_{-1}^1\Phi(x-t)\Phi(t)dt=e^{-|x|}+|x|e^{-|x|}-e^{x-2}/2-e^{-(x+2)}/2,
\end{eqnarray*}
%It is known that the reproducing kernel Hilbert space generated by a Mat\'ern function is equivalent to a Sobolev space. See \cite{wendland2004scattered} for details. In this case all smooth functions, such as polynomials, lies in $\mathcal{N}_{\Phi*\Phi}(\Omega)$.
and the computer model is
$$y^s(x,\theta)=\zeta(x)-\int_{-1}^1 z(x-y)(\theta y^2+0.8)dy,$$
where $\theta$ is the model parameter to be calibrated.

%$z(x)-\mu(x,\theta) \in\mathcal{N}_{\Phi*\Phi}(\Omega)$. 
Clearly, the discrepancy function $\zeta^\theta(x)=\int_{-1}^1 z(x-y)(\theta y^2+0.8)dy$ satisfies all conditions of Theorem \ref{th:calibration}. The identity (\ref{normequality}) implies
$$\|\zeta^\theta\|^2_{\mathcal{N}_\Phi(\Omega)}=\int_{-1}^1\int_{-1}^1 (\theta x^2+0.8)\Phi(x)\Phi(x-y)\Phi(y)(\theta y^2+0.8)d x d y.  $$
By numerical search, we find that, as a function of $\theta$, $\|\zeta^\theta\|^2_{\mathcal{N}_\Phi(\Omega)}$ is minimized at $\theta'=0.672$.

Suppose we observe data 
$$y_i=z(x_i)+e_i,i=1,\ldots,n,$$
where $e_i$'s are independent and identically distributed random errors following $ N(0,\tau^2)$.

Now we can compute $\hat{\theta}$ in (\ref{theta}). Following the theoretical guidance in Theorem \ref{th:calibration}, we choose $\lambda=n^{-2m/(4m+d)}$.
%First we compute $\hat{\theta}$ when $n=200$.
%The Monte Carlo sample mean of $\hat \theta$ over 100 runs is $0.672$, pretty close to $\theta'=0.672$.
%To estimate the slope, we consider a set of different sample sizes, given by $n=20k$ with $k=1,\ldots,30$
%Write $\lambda=\tau^2/(n\sigma^2)$ and 
%suppose $\lambda=n^{-2m/(4m+d)}$, where $m=\nu+d/2=1$.Suppose $\sigma^2=0.01$, then $\tau^2=n\lambda\sigma^2$.
To estimate the regression coefficient in (\ref{appro}), we choose $n$ different Sobol designs \cite{santner2013design} with sample sizes $n_j=20j,j=1,\ldots,30$. For each $j$, we repeat the simulation 100 times and calculate the Monte Carlo sample mean to reduce the random error.

\begin{figure}[htbp]
  \centering
  \label{fig:a}\includegraphics{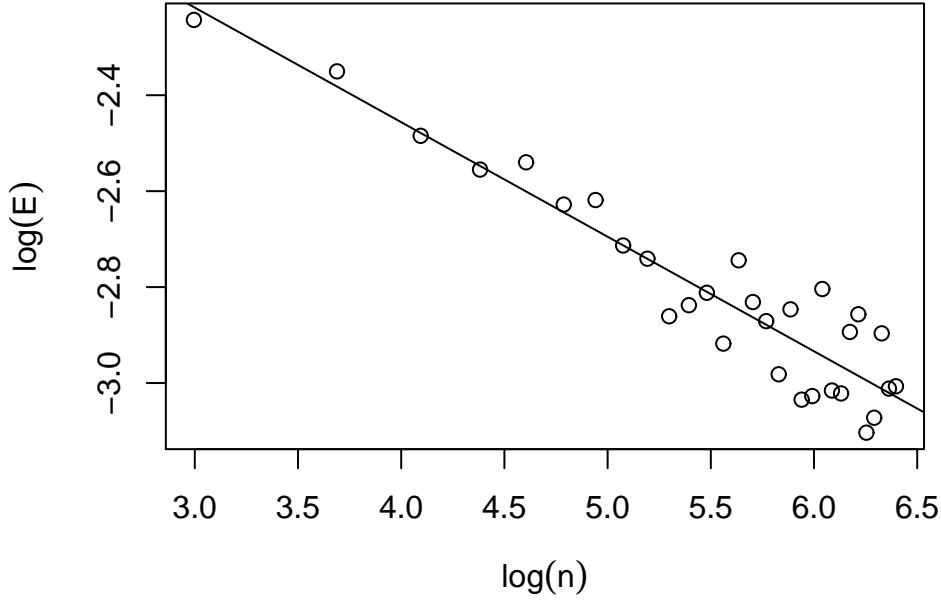}
  \caption{The scattered plot and the regression line of the simulated data.}
  \label{fig:testfig}
\end{figure}

The scattered plot of $\log E_j$ against $\log n_j$ is shown in Figure \ref{fig:a}. The estimated regression coefficient is -0.20058, which closely agrees with our theoretical assertion -0.2.

\section{Discussion}\label{sec:discussion}

%This work can be extended in several directions. 
In this work, we obtain some new results on the improved rates of convergence for kernel ridge regression. We apply this theory to study the asymptotic properties of the K-O calibration method for computer experiments. This new result generalizes the work by \cite{tuo2016theoretical}.

Several related problems that can be studied in the future.
In this article, we suppose the design set $\{x_1,\ldots,x_n\}$ is fixed and quasi-uniform. A further question is whether the improved rates still hold if the design points are random samples; for instance, if the design points are independent and follow the uniform distribution over $\Omega$. 

As is discussed in Section \ref{sec ir}, compared to the existing results,  the bias of the kernel ridge regression estimator is reduced by imposing the condition (\ref{eq2.14}), while the variance remains the same. Improved rates of convergence are achieved by rebalancing the bias and the variance. In other words, the choice of the smoothing parameters $\lambda_n$ is crucial in achieving the optimal rate of convergence. 
Suppose condition (\ref{eq2.14}) is fulfilled. Proposition \ref{th:improvedrate} implies that the optimal tuning parameter is $\lambda_n\sim n^{-\frac{2m}{4m+d}}$. If condition (\ref{eq2.14}) is not satisfied, we should return to the classic results given by Proposition \ref{th:existing}. In this case, the optimal tuning parameter is $\lambda_n\sim n^{-\frac{2m}{2m+d}}$, and $\lambda_n\sim n^{-\frac{2m}{4m+d}}$ would render a suboptimal rate of convergence.
%Proposition \ref{th:existing} and Theorem \ref{th:improvedrate} imply that the optimal choice of $\lambda_n$ depends on whether the condition (\ref{eq2.14}) holds. 
In most practical scenarios, we do not know whether the condition (\ref{eq2.14}) holds or not. Therefore, there is no \textit{a priori} optimal choice of $\lambda_n$. One would ask whether the optimal order of magnitude for $\lambda_n$ can be obtained by a data-driven approach. We conjecture that model selection criteria like the generalized cross validation \cite{wahba1990spline} can automatically adapt an optimal choice of $\lambda_n$.

\appendix

\section{Technical Proofs}\label{App:proof}

\begin{proof}[Proof of Theorem \ref{th:existence}]
	Without loss of generality, we can assume that the scale parameter $\phi$ in (\ref{matern}) is $1/(2\sqrt{\nu})$, because otherwise we can stretch the region $\Omega$ to make this happen. In this situation, the Mat\'ern kernel becomes
	\begin{equation*}\label{Phinu}
	\frac{1}{\Gamma(\nu)2^{\nu-1}}|x|^\nu K_\nu(|x|).
	\end{equation*}
	
	Suppose $f(x)=\int_{\Omega} \Phi(x-t) v(t)$ with $v\in L_2(\Omega)$. It can be justified that $f_e=\int_{\Omega}\Phi(x-t)v(t)d t$ for $x\in\mathbb{R}^d$. See \cite{schaback1999improved} for details. Define 
	\begin{eqnarray*}
		v_e(x)=\begin{cases}
			v(x), & x\in\Omega,\\
			0, & x\notin \Omega.
		\end{cases}
	\end{eqnarray*}
	Clearly $f_e(x)=\int_{\mathbb{R}^d}\Phi(x-t)v_e(t)d t$. For $h\in L_2(\mathbb{R}^d)$, denotes its Fourier transform and inverse Fourier transform by $\mathcal{F}(h)$ and $\mathcal{F}^{-1}(h)$, respectively.
	Then by the convolution theorem, $\mathcal{F}(f_e)=(2\pi)^{d/2}\mathcal{F}(\Phi)\mathcal{F}(v_e)$. Direct calculations \cite{stein2012interpolation,wendland2004scattered} give
	\begin{eqnarray}\label{Fourier}
	\mathcal{F}(\Phi)(\omega)&=&2^{d/2}\frac{\Gamma(\nu+d/2)}{\Gamma(\nu)}(1+\|\omega\|^2)^{-(\nu/2+d/4)}\nonumber\\
	&:=&C_0(1+\|\omega\|^2)^{-m/2}.
	\end{eqnarray}
	Note that $\mathcal{F}(f_e)/\mathcal{F}(\Phi)=\mathcal{F}(v_e)\in L_2(\mathbb{R}^d)$, which gives
	\begin{eqnarray}\label{Sobolev}
	\int_{\mathbb{R}^d}(1+\|\omega\|^2)^{m} |\mathcal{F}(f_e)(\omega)|^2 d\omega < +\infty.
	\end{eqnarray}
	According to Paragraph 7.62 of \cite{adams2003sobolev}, (\ref{Sobolev}) is equivalent to $f_e\in H^{2m}(\mathbb{R})$.
	
	Suppose $f_e\in H^{2m}(\mathbb{R})$. Then by (\ref{Sobolev}) and (\ref{Fourier}), we have $\mathcal{F}(f_e)/\mathcal{F}(\Phi)\in L_2(\mathbb{R}^d)$. Theorem 4.3 of \cite{schaback1999improved} proves that in this case, $h_f:=\mathcal{F}^{-1}(\mathcal{F}(f_e)/\mathcal{F}(\Phi))$ vanishes almost everywhere outside $\Omega$. Then according to the convolution theorem, $v:=h_f|_\Omega$ satisfies (\ref{integralequation}).
\end{proof} 

\begin{lemma}\label{lemma:empirical}	
	Suppose $\{x_1,\ldots,x_n\}\subset \Omega$,
	$e_1,\ldots,e_n$ are independent and identically distributed random variables which are sub-Gaussian. Then for all $t>c_1$, we have
	\begin{eqnarray}
	\sup_{g \in \mathcal{N}_{\Phi}(\Omega)}\frac{|\langle e,g\rangle_n|}{\| g \|_n^{1-\frac{d}{2m}} \| g \|_{\mathcal{N}_{\Phi}(\Omega)}^{\frac{d}{2m}} }\leq t n^{-1/2},
	\label{eq2.5}
	\end{eqnarray}
	with probability at least $1-c_2\exp\{-c_3 t^2\}$, where $\langle e,g\rangle_n$ is defined in (\ref{empiricalinnerproduct}).
\end{lemma}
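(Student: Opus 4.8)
Inequality (\ref{eq2.5}) is a standard maximal inequality from empirical process theory, and I would establish it by combining a metric-entropy estimate for Sobolev balls with a chaining tail bound and a peeling (``slicing'') device, along the lines of Chapter 3 of \cite{geer2000empirical}. The first step is a reduction: the ratio in (\ref{eq2.5}) is invariant under $g\mapsto\alpha g$ for $\alpha\neq 0$, since the numerator scales like $|\alpha|$ and the denominator like $|\alpha|^{1-\frac{d}{2m}}|\alpha|^{\frac{d}{2m}}=|\alpha|$; hence it suffices to control
\begin{equation*}
\sup_{g\in\mathcal{G},\,g\neq 0}\frac{|\langle e,g\rangle_n|}{\|g\|_n^{1-\frac{d}{2m}}},\qquad \mathcal{G}:=\{g\in\mathcal{N}_\Phi(\Omega):\|g\|_{\mathcal{N}_\Phi(\Omega)}\leq 1\}.
\end{equation*}
I then peel $\mathcal{G}$ according to the value of $\|g\|_n$: for integers $s\geq 0$ put $\mathcal{G}_s:=\{g\in\mathcal{G}:2^{-s-1}<\|g\|_n\leq 2^{-s}\}$, and treat separately a residual set $\{g\in\mathcal{G}:\|g\|_n\leq\rho_n\}$ with $\rho_n$ small, on which $|\langle e,g\rangle_n|\leq\|e\|_n\|g\|_n$ together with the strictly positive exponent $1-\frac{d}{2m}>0$ (here $m>d/2$ is used) makes the ratio negligible. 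On $\mathcal{G}_s$ the target reduces to $\sup_{g\in\mathcal{G}_s}|\langle e,g\rangle_n|\leq C\,t\,n^{-1/2}2^{-s(1-\frac{d}{2m})}$.

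\textbf{Entropy and chaining.} The analytic heart of the proof is the covering-number estimate for the unit ball $\mathcal{G}$ in the empirical metric $\|\cdot\|_n$: since $\mathcal{N}_\Phi(\Omega)$ is norm-equivalent to $H^m(\Omega)$ with $m>d/2$, one has $\log N(\varepsilon,\mathcal{G},\|\cdot\|_n)\leq C\varepsilon^{-d/m}$, with $C$ depending only on $m,d,\Omega$ and independent of the design points; this follows from the classical entropy bound for balls in $H^m(\Omega)$ together with the continuous embedding $H^m(\Omega)\hookrightarrow C(\Omega)$, which bounds point evaluations and hence gives $\|\cdot\|_n\leq C\|\cdot\|_\infty$ on $\mathcal{G}$. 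Conditionally on the design, $g\mapsto\langle e,g\rangle_n$ is a sub-Gaussian process for the pseudometric $\sigma_0 n^{-1/2}\|g-g'\|_n$, because the increment $\frac{1}{n}\sum_i e_i(g(x_i)-g'(x_i))$ is sub-Gaussian with proxy variance $\sigma_0^2 n^{-1}\|g-g'\|_n^2$. Since $d/m<2$, Dudley's entropy integral converges, $\int_0^\delta\sqrt{\log N(\varepsilon,\mathcal{G},\|\cdot\|_n)}\,d\varepsilon\lesssim\delta^{1-\frac{d}{2m}}$, and a Dudley-type tail inequality for suprema of sub-Gaussian processes yields, for each $s$ and every $t$ above an absolute constant,
\begin{equation*}
\mathbb{P}\Big(\sup_{g\in\mathcal{G}_s}|\langle e,g\rangle_n|>C\,t\,n^{-1/2}2^{-s(1-\frac{d}{2m})}\Big)\leq c\exp\{-c't^2\}.
\end{equation*}
A union bound over $s\geq 0$ — only $O(\log n)$ shells arise before the residual set takes over, and the geometric decay in $s$ can otherwise be absorbed into the exponent — combined with the residual estimate gives (\ref{eq2.5}) with probability at least $1-c_2\exp\{-c_3 t^2\}$.

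\textbf{Main obstacle.} The delicate part is obtaining everything \emph{uniformly in the design}: all constants, and the entropy bound $\log N(\varepsilon,\mathcal{G},\|\cdot\|_n)\lesssim\varepsilon^{-d/m}$ in particular, must hold simultaneously for every point configuration $\{x_1,\ldots,x_n\}\subset\Omega$, which is precisely where the embedding $H^m(\Omega)\hookrightarrow C(\Omega)$ (boundedness of point evaluations, hence $\|\cdot\|_n\leq C\|\cdot\|_\infty$) is essential. A secondary point requiring care is arranging the peeling union bound so that the geometric weights $2^{-s(1-\frac{d}{2m})}$ compensate the growth in the number of shells and preserve the Gaussian-type tail $\exp\{-c_3 t^2\}$; this is routine but is where the threshold constant $c_1$ on $t$ is fixed.
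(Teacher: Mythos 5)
Your proposal is correct and follows essentially the same route as the paper: the paper likewise normalizes to the unit ball of $\mathcal{N}_\Phi(\Omega)$, invokes the Sobolev metric-entropy bound $H(\epsilon,\mathcal{H},\|\cdot\|_n)\leq c\,\epsilon^{-d/m}$ (valid uniformly in the design since $\|\cdot\|_n$ is dominated by the sup-norm), and then concludes by citing the concentration inequality of Corollary 14.6 of \cite{buhlmann2011statistics}. The peeling-plus-chaining argument you sketch is precisely the standard proof of that cited inequality, so you have simply unpacked the black box rather than taken a different path.
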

%%Then for some constant $c$ depending on $A, R, K, d, m$ and $\sigma_0^2$, we have that for all $T\geq c$,

\begin{proof}
	For $g\in \mathcal{N}_\Phi(\Omega)$, let $h=g/\|g\|_{\mathcal{N}_\Phi(\Omega)}$. It is easily verified that
	\begin{eqnarray*}
		\frac{|\langle e,g\rangle_n|}{\| g \|_n^{1-\frac{d}{2m}} \| g \|_{\mathcal{N}_{\Phi}(\Omega)}^{\frac{d}{2m}} }=\frac{|\langle e,h\rangle_n|}{\| h \|_n^{1-\frac{d}{2m}} }.	
	\end{eqnarray*}
	Let $\mathcal{H}=\{h\in\mathcal{N}_\Phi(\Omega):\|h\|_{\mathcal{N}_\Phi(\Omega)}=1\}$.
	Noting that $\mathcal{N}_{\Phi}(\Omega)$ can be embedded into $H^m(\Omega)$, we can use the metric entropy of the Sobolev spaces \cite{Edmunds1996Function,tuo2015efficient} to find an upper bound of the metric entropy of $\mathcal{H}$ as
	\begin{eqnarray}\nonumber
	H(\epsilon,\mathcal{H},\| \cdot\|_n)\leq c_4 \epsilon ^{-d/m}.
	\end{eqnarray}
	We refer to \cite{van1996weak} for the definition and detailed discussions about the metric entropy of a function space. The remainder of the proof follows by invoking the concentration inequality given by Corollary 14.6 of \cite{buhlmann2011statistics}.
\end{proof}

\begin{proof}[Proof of Theorem \ref{th:equivalence}]
	Using (\ref{representer}) and (\ref{ridge}), we find that $\hat{\zeta}^\theta$ can be expressed by
	\begin{eqnarray*}
		\hat{\zeta}^\theta=\sum_{i=1}^n c_i^\theta \Phi(x-x_i),
	\end{eqnarray*}
	with $c^\theta=(c_1^\theta,\ldots,c_n^\theta)^T$ defined as
	\begin{eqnarray*}
		c=(\mathbf{\Phi}+n\lambda I_n)^{-1} Y_\theta,\end{eqnarray*}
	where $\mathbf{\Phi}=(\Phi(x_i,x_j))_{i j}$ and $Y_\theta=(\zeta^\theta_1,\ldots,\zeta^\theta_n)^T$. The norm of $\hat{\zeta}^\theta$ in $\mathcal{N}_\Phi(\Omega)$ can be calculated using (\ref{eq1.2}), given by
	\begin{eqnarray}\label{native}
	\|\hat{\zeta}^\theta\|^2_{\mathcal{N}_\Phi(\Omega)}=Y_\theta^T (\mathbf{\Phi}+n\lambda I_n)^{-1} \mathbf{\Phi} (\mathbf{\Phi}+n\lambda I_n)^{-1} Y_\theta.
	\end{eqnarray}
	Also, $\hat{\zeta}^\theta(X):=(\hat{\zeta}^\theta(x_1),\ldots,\hat{\zeta}^\theta(x_n))^T$ can be expressed by
	\begin{eqnarray*}
		\hat{\zeta}^\theta(X)=\mathbf{\Phi}(\mathbf{\Phi}+n\lambda I_n)^{-1} Y_\theta,
	\end{eqnarray*}
	which yields
	\begin{eqnarray*}
		Y_\theta-\hat{\zeta}^\theta(X)&=&(I-\mathbf{\Phi}(\mathbf{\Phi}+n\lambda I_n)^{-1})Y_\theta\nonumber\\&=&n\lambda (\mathbf{\Phi}+n\lambda I_n)^{-1} Y_\theta.
	\end{eqnarray*}
	Thus
	\begin{eqnarray}\label{n}
	\frac{1}{n}\sum_{i=1}^n (\zeta_i^\theta-\hat{\zeta}^\theta(x_i))^2&=&\frac{1}{n}(Y_\theta-\hat{\zeta}^\theta(X))^T(Y_\theta-\hat{\zeta}^\theta(X))\nonumber\\
	&=&n\lambda^2Y_\theta^T (\mathbf{\Phi}+n\lambda I_n)^{-2} Y_\theta.
	\end{eqnarray}
	From (\ref{native}) and (\ref{n}) we obtain
	\begin{eqnarray*}
		\frac{1}{n}\sum_{i=1}^n (\zeta_i^\theta-\hat{\zeta}^\theta(x_i))^2+\lambda\|\hat{\zeta}^\theta\|^2_{\mathcal{N}_\Phi(\Omega)}=\lambda Y_\theta^T(\mathbf{\Phi}+n\lambda I_n)^{-1} Y_\theta,
	\end{eqnarray*}
	which implies the desired results.
\end{proof}

\begin{lemma}\label{lemma:order}
	Let $\lambda, P, Q, s, t$ be nonnegative numbers and $n$ be a positive integer. If there exist constants $a_1,a_2,a_3,a_4>0$, such that
	\begin{eqnarray}\label{PQ}
	P^2+ a_1\lambda Q^2\leq a_2 s \lambda P+a_3 t n^{-\frac{1}{2}}P^{1-\frac{d}{2m}} Q^{\frac{d}{2m}},
	\end{eqnarray}
	then we have
	\begin{eqnarray*}
		\begin{aligned}
			P&\leq  c_1 s\lambda\vee c_2 t n^{-\frac{1}{2}}\lambda^{-\frac{d}{4m}} ,\\
			Q&\leq c_3 s\lambda^{\frac{1}{2}}\vee c_4 t n^{-\frac{1}{2}}\lambda^{-\frac{2m+d}{4m}} .
		\end{aligned}
	\end{eqnarray*}
	Here $c_1,c_2,c_3,c_4$ are independent of $P, Q,\lambda, s$, and $t$.
\end{lemma}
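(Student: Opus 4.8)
The plan is to collapse~(\ref{PQ}) into a single algebraic estimate by absorbing parts of both right-hand terms into the left-hand side via a weighted Young inequality, so that no case analysis is needed. Put $\alpha := d/(2m)$; since $m>d/2$ we have $\alpha\in(0,1)$, which is exactly what makes the Young exponents used below finite. If $\lambda=0$ the asserted bounds are $+\infty$ and there is nothing to prove, and if $P=0$ then~(\ref{PQ}) forces $Q=0$ whenever $\lambda>0$; so I may assume $\lambda,P>0$.

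First I would bound the linear term by $a_2 s\lambda P\le \tfrac14 P^2+a_2^2 s^2\lambda^2$. For the mixed term I would write
\begin{equation*}
a_3\,tn^{-1/2}P^{1-\alpha}Q^{\alpha}=P^{1-\alpha}\cdot(\lambda^{1/2}Q)^{\alpha}\cdot(a_3\,tn^{-1/2}\lambda^{-\alpha/2})
\end{equation*}
and apply the three-factor Young inequality with exponents $p=2/(1-\alpha)$, $q=2/\alpha$, $r=2$, which satisfy $1/p+1/q+1/r=1$, with scaling parameters chosen so that the coefficients of $P^2$ and $\lambda Q^2$ that it produces equal $\tfrac14$ and $\tfrac{a_1}{2}$. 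This gives
\begin{equation*}
a_3\,tn^{-1/2}P^{1-\alpha}Q^{\alpha}\le \tfrac14 P^2+\tfrac{a_1}{2}\lambda Q^2+c\,t^2n^{-1}\lambda^{-\alpha},
\end{equation*}
where $c$ depends only on $a_1,a_3,\alpha$. The one genuinely delicate point is keeping track of the powers of $\lambda$: the contributions from the first two factors must cancel so that precisely $\lambda^{-\alpha}$ survives in the remainder. This is the step I expect to require the most care, though it remains routine.

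Substituting both bounds into~(\ref{PQ}) and moving the absorbed $\tfrac12 P^2$ and $\tfrac{a_1}{2}\lambda Q^2$ to the left yields
\begin{equation*}
\tfrac12 P^2+\tfrac{a_1}{2}\lambda Q^2\le a_2^2 s^2\lambda^2+c\,t^2n^{-1}\lambda^{-\alpha}.
\end{equation*}
Discarding the nonnegative $Q$-term and using $\sqrt{u+w}\le\sqrt u+\sqrt w\le 2(\sqrt u\vee\sqrt w)$ gives $P\le c_1 s\lambda\vee c_2 tn^{-1/2}\lambda^{-\alpha/2}$, and since $\alpha/2=d/(4m)$ this is the claimed bound on $P$. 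Discarding instead the $P$-term gives $Q^2\le \tfrac{2a_2^2}{a_1}s^2\lambda+\tfrac{2c}{a_1}t^2n^{-1}\lambda^{-1-\alpha}$, hence $Q\le c_3 s\lambda^{1/2}\vee c_4 tn^{-1/2}\lambda^{-(1+\alpha)/2}$, and since $(1+\alpha)/2=(2m+d)/(4m)$ this is the claimed bound on $Q$. By construction all of $c_1,\dots,c_4$ depend only on $m,d$ and $a_1,\dots,a_4$, hence are independent of $P,Q,\lambda,s,t,n$.

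As a fallback I would instead split into two cases according to whether $a_2 s\lambda P$ or $a_3 tn^{-1/2}P^{1-\alpha}Q^{\alpha}$ is the larger of the two right-hand terms of~(\ref{PQ}). In the first case~(\ref{PQ}) reduces to $P^2+a_1\lambda Q^2\le 2a_2 s\lambda P$, which gives $P\le 2a_2 s\lambda$ and then $Q\le (2a_2/\sqrt{a_1})\,s\lambda^{1/2}$ at once. In the second case one obtains $P^{1+\alpha}\le 2a_3 tn^{-1/2}Q^{\alpha}$ and $a_1\lambda Q^{2-\alpha}\le 2a_3 tn^{-1/2}P^{1-\alpha}$; eliminating $P$ between these two and simplifying the exponent $2-\alpha-\alpha(1-\alpha)/(1+\alpha)=2/(1+\alpha)$ reproduces the bound on $Q$, and back-substitution reproduces the bound on $P$. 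This avoids Young's inequality but involves the identical exponent arithmetic, so I would present the first approach.
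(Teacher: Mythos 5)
Your main argument is correct and takes a genuinely different route from the paper. The paper proves the lemma by a case split: since one of the two right-hand terms of (\ref{PQ}) must be at least half the total, it reduces to either $P^2+a_1\lambda Q^2\le 2a_2 s\lambda P$ or $P^2+a_1\lambda Q^2\le 2a_3 t n^{-1/2}P^{1-\frac{d}{2m}}Q^{\frac{d}{2m}}$, and then solves each two-inequality system by the exponent arithmetic you sketch in your fallback (your fallback is, in fact, essentially the paper's proof verbatim, including the exponent $2/(1+\alpha)$ computation hidden behind the paper's ``elementary calculations''). Your primary argument instead absorbs both right-hand terms into the left via $a_2 s\lambda P\le\frac14P^2+a_2^2s^2\lambda^2$ and a weighted three-factor Young inequality with exponents $2/(1-\alpha)$, $2/\alpha$, $2$ applied to the factorization $P^{1-\alpha}\cdot(\lambda^{1/2}Q)^{\alpha}\cdot\bigl(a_3tn^{-1/2}\lambda^{-\alpha/2}\bigr)$; the exponent bookkeeping checks out ($P^{2}$, $\lambda Q^{2}$, and $t^2n^{-1}\lambda^{-\alpha}$ are exactly what the three powers produce, and the rescaling needed to get coefficients $\frac14$ and $\frac{a_1}{2}$ only affects the constant in the remainder, not the power of $\lambda$). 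This yields the single inequality $\frac12P^2+\frac{a_1}{2}\lambda Q^2\le a_2^2s^2\lambda^2+c\,t^2n^{-1}\lambda^{-\alpha}$, from which both claimed bounds follow with $\alpha/2=\frac{d}{4m}$ and $(1+\alpha)/2=\frac{2m+d}{4m}$, and your handling of the degenerate cases $\lambda=0$ and $P=0$ is adequate. What each approach buys: the paper's case analysis is more elementary (only two-term AM--GM-type manipulations) but requires solving a small nonlinear system of exponents in Case II; your Young-inequality absorption avoids cases entirely and makes the $\lambda$-exponents transparent, at the price of invoking the weighted three-factor Young inequality and producing sum-form bounds that you then convert to max-form with slightly larger constants, which is harmless since the lemma's constants are unspecified.
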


\begin{proof}
	Clearly, (\ref{PQ}) implies either
	\begin{equation*}
	P^2+ a_1\lambda Q^2\leq 2 a_2 s \lambda P,
	\end{equation*}
	or
	\begin{equation*}
	P^2+ a_1\lambda Q^2\leq 2 a_3 n^{-1/2}P^{1-\frac{d}{2m}} Q^{\frac{d}{2m}}.
	\end{equation*}
	Next we consider these two cases separately.
	
	Case I. Suppose $P^2+ a_1\lambda Q^2\leq 2 a_2 s \lambda P$. Then we have
	\begin{eqnarray*}
		\begin{aligned}
			P^2&\leq 2 a_2 s \lambda P,\\
			a_1\lambda Q^2&\leq 2 a_2 s \lambda P,
		\end{aligned}
	\end{eqnarray*}
	which yields
	\begin{eqnarray}\label{le2part1}
	\begin{aligned}
	P&\leq 2 a_2 s \lambda ,\\
	Q&\leq 2 a_2 a_1^{-1/2} s \lambda^{1/2}.
	\end{aligned}
	\end{eqnarray}
	%which yields the desired results directly.
	
	Case II. Suppose $P^2+ a_1\lambda Q^2\leq 2 a_3 n^{-1/2}P^{1-\frac{d}{2m}} Q^{\frac{d}{2m}}$. Then we have
	\begin{eqnarray}\label{PQineq2}
	\begin{aligned}
	P^2&\leq 2 a_3 t n^{-1/2}P^{1-\frac{d}{2m}} Q^{\frac{d}{2m}},\\
	a_1\lambda Q^2&\leq 2 a_3 t n^{-1/2}P^{1-\frac{d}{2m}} Q^{\frac{d}{2m}}.
	\end{aligned}
	\end{eqnarray}
	By elementary calculations, we find that (\ref{PQineq2}) implies
	\begin{eqnarray}\label{le2part2}
	\begin{split}
	&P\leq c_5 t n^{-\frac{1}{2}}\lambda^{-\frac{d}{4m}},\\
	&Q\leq c_6 t n^{-\frac{1}{2}}\lambda^{-\frac{2m+d}{4m}}.
	\end{split}
	\end{eqnarray}
	
	The desired results then follows by combining (\ref{le2part1}) and (\ref{le2part2}).
\end{proof}

\begin{proof}[Proof of Theorem \ref{th:uniform}]
	Following similar arguments as those in (\ref{eq2.20})-(\ref{improvedbasic}), we can deduce the improved basic inequality
	\begin{eqnarray}\label{uniformbasicineq}
	\begin{split}
	&\| \zeta^\theta-\hat{\zeta}^\theta_n\|_n^2+\lambda_n \| \zeta^\theta-\hat{\zeta}^\theta_n\|^2_{\mathcal{N}_\Phi(\Omega)}\\
	&\leq 2\langle e,\hat{\zeta}^\theta_n-\zeta^\theta\rangle_n+2\lambda_n\|v_\theta\|_{L_2(\Omega)}\| \zeta^\theta-\hat{\zeta}^\theta_n\|_{L_2(\Omega)},
	\end{split}	
	\end{eqnarray}
	which holds for all $\theta\in\Theta$.
	
	It follows from Proposition \ref{Prop:utreras} and (\ref{hbound}) that, for sufficiently large $n$,
	\begin{eqnarray}
	%\| \zeta^\theta-\hat{\zeta}^\theta_n\|_n^2&\geq& c_6 \| \zeta^\theta-\hat{\zeta}^\theta_n\|^2_{L_2(\Omega)}-n^{-\frac{2m}{d}}\| \zeta^\theta-\hat{\zeta}^\theta_n\|^2_{H^m(\Omega)}\nonumber\\
	%&\geq&c_6 \| \zeta^\theta-\hat{\zeta}^\theta_n\|^2_{L_2(\Omega)}-c_7 n^{-\frac{2m}{d}}\| \zeta^\theta-\hat{\zeta}^\theta_n\|^2_{\mathcal{N}_\Phi(\Omega)},\label{ntoL2}
	\|\zeta^\theta-\hat{\zeta}_n^\theta\|_{L_2(\Omega)}&\leq& c_6\sqrt{\|\zeta^\theta-\hat{\zeta}_n^\theta\|^2_n+n^{-\frac{2m}{d}}\|\zeta^\theta-\hat{\zeta}_n^\theta\|^2_{H^m(\Omega)}}\nonumber\\
	&\leq& c_6\left\{\|\zeta^\theta-\hat{\zeta}^\theta_n\|_n+n^{-\frac{m}{d}}\|\zeta^\theta-\hat{\zeta}^\theta_n\|_{H^m(\Omega)}\right\}\nonumber\\
	&\leq& c_6\|\zeta^\theta-\hat{\zeta}^\theta_n\|_n+c_7 n^{-\frac{m}{d}}\|\zeta^\theta-\hat{\zeta}^\theta_n\|_{\mathcal{N}_\Phi(\Omega)},\label{l2ton}
	\end{eqnarray}
	where the last inequality follows from the assumption that $\|\cdot\|_{H^m(\Omega)}$ and $\|\cdot\|_{\mathcal{N}_\Phi(\Omega)}$ are equivalent. 
	
	Combining (\ref{uniformbasicineq}), (\ref{l2ton}) and the condition $\bar{v}=\sup_{\theta\in\Theta}\|v_\theta\|_{L_2(\Omega)}<+\infty$ yields
	\begin{eqnarray}\label{threeterms}
	\begin{split}
	&\| \zeta^\theta-\hat{\zeta}^\theta_n\|_n^2+\lambda_n \| \zeta^\theta-\hat{\zeta}^\theta_n\|^2_{\mathcal{N}_\Phi(\Omega)}\leq 2\langle e,\hat{\zeta}^\theta_n-\zeta^\theta\rangle_n\\
	&+2c_6\lambda_n\bar{v}\| \zeta^\theta-\hat{\zeta}^\theta_n\|_{L_2(\Omega)}+2c_7\lambda_n\bar{v}n^{-\frac{m}{d}}\|\zeta^\theta-\hat{\zeta}_n^\theta\|_{\mathcal{N}_\Phi(\Omega)}.
	\end{split}	
	\end{eqnarray}
	
	Now we consider three different cases.
	
	Case I. Suppose $n^{-\frac{m}{d}}\|\zeta^\theta-\hat{\zeta}^\theta_n\|_{\mathcal{N}_\Phi(\Omega)}\leq \|\zeta^\theta-\hat{\zeta}^\theta_n\|_{L_2(\Omega)}$. Then we obtain from (\ref{threeterms}) that
	\begin{eqnarray}\label{caseI}
	\begin{split}
	&\| \zeta^\theta-\hat{\zeta}^\theta_n\|_n^2+\lambda_n \| \zeta^\theta-\hat{\zeta}^\theta_n\|^2_{\mathcal{N}_\Phi(\Omega)}\leq 2\langle e,\hat{\zeta}^\theta_n-\zeta^\theta\rangle_n\\
	&+2(c_6+c_7)\lambda_n\bar{v}\| \zeta^\theta-\hat{\zeta}^\theta_n\|_{L_2(\Omega)}.
	\end{split}
	\end{eqnarray}
	
	Case II. Suppose $n^{-\frac{m}{d}}\|\zeta^\theta-\hat{\zeta}^\theta_n\|_{\mathcal{N}_\Phi(\Omega)}> \|\zeta^\theta-\hat{\zeta}^\theta_n\|_{L_2(\Omega)}$ and $4c_7\bar{v} n^{-\frac{m}{d}}\leq \|\zeta^\theta-\hat{\zeta}^\theta_n\|_{\mathcal{N}_\Phi(\Omega)}$. Then we can cancel the term $\lambda_n \| \zeta^\theta-\hat{\zeta}^\theta_n\|^2_{\mathcal{N}_\Phi(\Omega)}/2$ from both sides of (\ref{threeterms}) and get
	\begin{eqnarray}\label{caseII}
	\begin{split}
	&\| \zeta^\theta-\hat{\zeta}^\theta_n\|_n^2+\frac{1}{2}\lambda_n \| \zeta^\theta-\hat{\zeta}^\theta_n\|^2_{\mathcal{N}_\Phi(\Omega)}\leq 2\langle e,\hat{\zeta}^\theta_n-\zeta^\theta\rangle_n\\
	&+2c_6\lambda_n\bar{v}\| \zeta^\theta-\hat{\zeta}^\theta_n\|_{L_2(\Omega)}.
	\end{split}
	\end{eqnarray}
	
	Case III. Suppose $n^{-\frac{m}{d}}\|\zeta^\theta-\hat{\zeta}^\theta_n\|_{\mathcal{N}_\Phi(\Omega)}> \|\zeta^\theta-\hat{\zeta}^\theta_n\|_{L_2(\Omega)}$ and $4c_7\bar{v} n^{-\frac{m}{d}}> \|\zeta^\theta-\hat{\zeta}^\theta_n\|_{\mathcal{N}_\Phi(\Omega)}$. It follows directly from this assumption that
	\begin{eqnarray*}
		\|\zeta^\theta-\hat{\zeta}^\theta_n\|_{\mathcal{N}_\Phi(\Omega)}<4c_7\bar{v} n^{-\frac{m}{d}},\\
		\|\zeta^\theta-\hat{\zeta}^\theta_n\|_{L_2(\Omega)}<4c_7\bar{v} n^{-\frac{2m}{d}},
	\end{eqnarray*}
	from which we have already arrived at the desired results.
	
	Now we only need to consider the first two cases. Clearly, both (\ref{caseI}) and (\ref{caseII}) can be expressed as
	\begin{eqnarray}\label{l2basicineq}
	\begin{split}
	&\| \zeta^\theta-\hat{\zeta}^\theta_n\|_n^2+c_8\lambda_n \| \zeta^\theta-\hat{\zeta}^\theta_n\|^2_{\mathcal{N}_\Phi(\Omega)}\leq \\
	&2\langle e,\hat{\zeta}^\theta_n-\zeta^\theta\rangle_n+c_9\lambda_n\bar{v}\| \zeta^\theta-\hat{\zeta}^\theta_n\|_{L_2(\Omega)}.
	\end{split}
	\end{eqnarray}
	%Because $n^{-\frac{2m}{d}}$ decays much faster than $\lambda_n\geq a_3n^{-\frac{2m}{4m+d}}$, the second term in (\ref{ntoL2}) can be absorbed by the term $\lambda_n \| \zeta^\theta-\hat{\zeta}^\theta_n\|^2_{\mathcal{N}_\Phi(\Omega)}$ on the left hand side of (\ref{uniformbasicineq}), by introducing a constant factor $c_8\in(0,1)$. Thus we combine (\ref{uniformbasicineq}) and (\ref{ntoL2}) and find that for sufficiently large $n$,
	%\begin{eqnarray}\label{l2basicineq}
	%	\begin{split}
	%&\| \zeta^\theta-\hat{\zeta}^\theta_n\|_{L_2(\Omega)}^2+c_8\lambda_n \| \zeta^\theta-\hat{\zeta}^\theta_n\|^2_{\mathcal{N}_\Phi(\Omega)}\\
	%&\leq 2\langle e,\hat{\zeta}^\theta_n-\zeta^\theta\rangle_n+2\lambda_n\|v_\theta\|_{L_2(\Omega)}\| \zeta^\theta-\hat{\zeta}^\theta_n\|_{L_2(\Omega)}.
	%\end{split}	
	%\end{eqnarray}
	%	
	On the event $A_t$, we have the inequality
	\begin{eqnarray}
	&&|\langle e,\hat{\zeta}^\theta_n-\zeta^\theta\rangle_n|\nonumber\\
	&\leq& \sup_{g \in \mathcal{N}_{\Phi}(\Omega)}\frac{|\langle e,g\rangle_n|}{\| g \|_n^{1-\frac{d}{2m}} \| g \|_{\mathcal{N}_{\Phi}(\Omega)}^{\frac{d}{2m}} } \cdot \| \hat{\zeta}^\theta_n-\zeta^\theta \|_n^{1-\frac{d}{2m}} \| \hat{\zeta}^\theta_n-\zeta^\theta \|_{\mathcal{N}_{\Phi}(\Omega)}^{\frac{d}{2m}}\nonumber\\
	&\leq& t n^{-1/2}\| \hat{\zeta}^\theta_n-\zeta^\theta \|_n^{1-\frac{d}{2m}} \| \hat{\zeta}^\theta_n-\zeta^\theta \|_{\mathcal{N}_{\Phi}(\Omega)}^{\frac{d}{2m}},\label{maximumineq}
	\end{eqnarray}
	Combining inequalities (\ref{l2basicineq})-(\ref{maximumineq}) yields
	\begin{eqnarray*}
		\begin{split}
			&\| \zeta^\theta-\hat{\zeta}^\theta_n\|_n^2+c_8\lambda_n \| \zeta^\theta-\hat{\zeta}^\theta_n\|^2_{\mathcal{N}_\Phi(\Omega)}\\
			&\leq c_9\lambda_n\bar{v} \|\zeta^\theta-\hat{\zeta}^\theta_n\|_n+2 t n^{-1/2}\| \hat{\zeta}^\theta_n-\zeta^\theta \|_n^{1-\frac{d}{2m}} \| \hat{\zeta}^\theta_n-\zeta^\theta \|_{\mathcal{N}_{\Phi}(\Omega)}^{\frac{d}{2m}}.
		\end{split}	
	\end{eqnarray*}
	Then we obtain the desired results by applying Lemma \ref{lemma:order}.
\end{proof}

\begin{proof}[Proof of Theorem \ref{th:calibration}]
	Under the condition (\ref{lambdabound}), it is not hard to verify that $\bar{v}\lambda_n$ and $\bar{v}\lambda^{1/2}_n$ are bounded by a multiple of $tn^{-1/2}\lambda_n^{-\frac{d}{4m}}$ and $tn^{-1/2}\lambda_n^{-\frac{2m+d}{4m}}$, respectively. Thus Theorem \ref{th:uniform} gives
	\begin{eqnarray}
	&&\sup_{\theta\in\Theta}\|\hat{\zeta}^\theta_n-\zeta^\theta\|_n\leq c_2 \bar{v}\lambda_n,\label{bp1}\\
	&&\sup_{\theta\in\Theta}\|\hat{\zeta}^\theta_n-\zeta^\theta\|_{\mathcal{N}_\Phi(\Omega)}\leq c_3 \bar{v}\lambda_n^{1/2}.\label{bp2}
	\end{eqnarray}
	
	Using the definition of $\hat{\theta}_n$, we obtain the basic inequality
	\begin{equation*}
	\begin{split}
	&\frac{1}{n}\sum_{i=1}^n(\zeta_i^{\hat{\theta}_n}-\hat{\zeta}_n^{\hat{\theta}_n}(x_i))^2+\lambda_n\|\hat{\zeta}_n^{\hat{\theta}_n}\|^2_{\mathcal{N}_\Phi(\Omega)}\\
	&\leq \frac{1}{n}\sum_{i=1}^n(\zeta_i^{\theta'}-\hat{\zeta}_n^{\theta'}(x_i))^2+\lambda_n\|\hat{\zeta}_n^{\theta'}\|^2_{\mathcal{N}_\Phi(\Omega)},
	\end{split}
	\end{equation*}
	which is equivalent to
	\begin{eqnarray}\label{basicinequcalibration}
	\begin{split}
	&\lambda_n\left\{\|\zeta^{\hat{\theta}_n}\|^2_{\mathcal{N}_\Phi(\Omega)}-\|\zeta^{\theta'}\|^2_{\mathcal{N}_\Phi(\Omega)}\right\}\\
	&\leq \left\{\|\hat{\zeta}_n^{\theta'}-\zeta^{\theta'}\|_n^2-\|\hat{\zeta}_n^{\hat{\theta}_n}-\zeta^{\hat{\theta}_n}\|_n^2\right\}\\
	&+2\left\{\langle e,\hat{\zeta}_n^{\hat{\theta}_n}-\zeta^{\hat{\theta}_n}\rangle_n- \langle e,\hat{\zeta}_n^{\theta'}-\zeta^{\theta'}\rangle_n\right\}\\
	&+\lambda_n\left\{\|\zeta^{\hat{\theta}_n}\|^2_{\mathcal{N}_\Phi(\Omega)}-\|\hat{\zeta}_n^{\hat{\theta}_n}\|^2_{\mathcal{N}_\Phi(\Omega)}-\|\zeta^{\theta'}\|^2_{\mathcal{N}_\Phi(\Omega)}+\|\hat{\zeta}_n^{\theta'}\|^2_{\mathcal{N}_\Phi(\Omega)}\right\}\\
	&=:D_1+2D_2+\lambda_n D_3.
	\end{split}
	\end{eqnarray}
	
	Now we bound $D_1,D_2$ and $D_3$ conditional on the event $A_t$. Using (\ref{bp1}), we have
	\begin{eqnarray}\label{D1}
	D_1\leq \sup_{\theta\in\Theta}\|\hat{\zeta}^\theta_n-\zeta^\theta\|^2_n\leq c_4 \bar{v}^2 \lambda_n^2.
	\end{eqnarray}
	By (\ref{bp1})-(\ref{bp2}) and the definition of $A_t$, we obtain
	\begin{eqnarray}\label{D2}
	D_2&\leq& 2 \sup_{\theta\in\Theta}|\langle e,\hat{\zeta}_n^{\theta}-\zeta^{\theta}\rangle_n|\nonumber\\
	&\leq&2\sup_{g}\frac{|\langle e,g\rangle_n|}{\|g\|_n^{1-\frac{d}{2m}}\|g\|_{\mathcal{N}_\Phi(\Omega)}^{\frac{d}{2m}}}\sup_{\theta\in\Theta}\|\hat{\zeta}_n^{\theta}-\zeta^{\theta}\|_n^{1-\frac{d}{2m}}\sup_{\theta\in\Theta}\|\hat{\zeta}_n^{\theta}-\zeta^{\theta}\|_{\mathcal{N}_\Phi(\Omega)}^{\frac{d}{2m}}\nonumber\\
	&\leq& c_5 tn^{-\frac{1}{2}} \bar{v}\lambda_n^{\frac{4m-d}{4m}}\nonumber\\
	&\leq&c_6 \bar{v}^2\lambda_n^2,
	\end{eqnarray}
	where the last inequality follows from condition (\ref{lambdabound}).
	For any $\theta\in\Theta$, we bound
	\begin{eqnarray*}
		&&\left|\|\hat{\zeta}_n^{\theta}\|^2_{\mathcal{N}_\Phi(\Omega)}-\|\zeta^{\theta}\|^2_{\mathcal{N}_\Phi(\Omega)}\right|\nonumber\\
		&=&\left|\|\hat{\zeta}_n^\theta-\zeta^\theta\|^2_{\mathcal{N}_\Phi(\Omega)}+2\langle\hat{\zeta}_n^\theta-\zeta^\theta,\zeta^\theta\rangle_{\mathcal{N}_\Phi(\Omega)}\right|\nonumber\\
		&\leq&\|\hat{\zeta}_n^\theta-\zeta^\theta\|^2_{\mathcal{N}_\Phi(\Omega)}+2\left|\langle\hat{\zeta}_n^\theta-\zeta^\theta,\zeta^\theta\rangle_{\mathcal{N}_\Phi(\Omega)}\right|\nonumber\\
		&=&\|\hat{\zeta}_n^\theta-\zeta^\theta\|^2_{\mathcal{N}_\Phi(\Omega)}+2\left|\langle\hat{\zeta}_n^\theta-\zeta^\theta,v_\theta\rangle_{L_2(\Omega)}\right|\nonumber\\
		&\leq&\|\hat{\zeta}_n^\theta-\zeta^\theta\|^2_{\mathcal{N}_\Phi(\Omega)}+2\|v_\theta\|_{L_2(\Omega)}\|\hat{\zeta}_n^\theta-\zeta^\theta\|_{L_2(\Omega)}\nonumber\\
		&\leq&c_3^2\bar{v}^2\lambda_n+2 c_2 \bar{v}^2 \lambda_n\nonumber\\
		&=&c_9 \bar{v}^2 \lambda_n,
	\end{eqnarray*}
	where the second equality follows from (\ref{normequality}); the second inequality follows from Cauchy-Schwarz inequality; the third inequality follows from (\ref{bp1}) and (\ref{bp2}). Therefore, we obtain the bound
	\begin{eqnarray}\label{D3}
	D_3\leq 2 c_9 \bar{v}^2\lambda_n.
	\end{eqnarray}
	Combining (\ref{basicinequcalibration}), (\ref{D1}), (\ref{D2}) and (\ref{D3}) and using the condition $t>1$ yields
	\begin{eqnarray}\label{final}
	\|\zeta^{\hat{\theta}_n}\|^2_{\mathcal{N}_\Phi(\Omega)}-\|\zeta^{\theta'}\|^2_{\mathcal{N}_\Phi(\Omega)}\leq c_{10} \bar{v}^2\lambda_n.
	\end{eqnarray}
	
	The assumption (\ref{curvature}) implies
	\begin{eqnarray*}
		a_2\min\{\|\hat{\theta}_n-\theta'\|^\gamma,a_3\}
		\leq \|\zeta^{\hat{\theta}_n}\|^2_{\mathcal{N}_\Phi(\Omega)}-\|\zeta^{\hat{\theta}_n}\|^2_{\mathcal{N}_\Phi(\Omega)},
	\end{eqnarray*}
	which, together with (\ref{final}) and the condition $\bar{v}^2\lambda_n<c_1:=a_3/(a_2 c_{10})$, yields the desired results.
\end{proof}

\bibliographystyle{siamplain}
\bibliography{all.ref}

\end{document}